\newtheorem{example}{Example}
\newtheorem{definition}{Definition}
\newtheorem{Lemma}{Lemma}
  \newcommand{\nn}{\nonumber}
\newtheorem{theorem}{Theorem}
\begin{document}

\title[STGPC]{Generating fractal functions associated with Suzuki iterated function systems}


\author*[1]{\fnm{Mridul} \sur{Patel}}\email{mridul.patel@student.rmit.edu.au }

\author[1]{\fnm{G.} \sur{Verma}}\email{geetika.verma@rmit.edu.au}
\equalcont{These authors contributed equally to this work.}

\author[1]{\fnm{A.} \sur{Eberhard}}\email{andy.eberhard@rmit.edu.au}
\equalcont{These authors contributed equally to this work.}

\author[1]{\fnm{A.} \sur{Rao}}\email{asha.rao@rmit.edu.au}
\equalcont{These authors contributed equally to this work.}

\affil[1]{\orgdiv{School of Mathematical Sciences}, \orgname{RMIT University}, \orgaddress{\street{124, LaTrobe Street}, \city{Melbourne}, \postcode{3000}, \state{VIC}, \country{Australia}}}





\abstract{This article constructs a fractal interpolation function, also referred to as $\alpha$-fractal function, using Suzuki-type generalized $\varphi$-contraction mappings (STGPC). The STGPC is a generalization of $\varphi$-contraction mappings. The process of constructing $\alpha$-fractal functions using the STGPC is detailed, and examples of STGPC are given. The FIF has broad applications in data analysis, finance and price prediction. We have included a case study analyzing the price volatility of spinach in the Azadpur vegetable market in New Delhi. The fractal analysis gives a unique perspective on understanding price fluctuations over a period. Finally, the box-dimensional analysis is presented to comprehend the complexity of price fluctuations.   }

\keywords{Fractals, Suzuki generalized mapping, Fractal interpolation function, Iterated function systems, Dimensional analysis.}



\maketitle

\section{Introduction}\label{Intro}

Fractals are complex geometric shapes that can be divided into parts, with each part being a smaller-scale replica of the whole. This characteristic is known as self-similarity.  Fractals can be observed in nature, appearing in the shapes of mountains and coastlines and in the branching patterns of trees and blood vessels \citep{mandelbrot1982fractal}. The concept of fractals was popularized by Benoît B. Mandelbrot in his influential work "The Fractal Geometry of Nature" \citep{mandelbrot1982fractal}. Mandelbrot \citep{mandelbrot1982fractal} demonstrated that many complex, irregular structures could be described and analyzed mathematically using fractal geometry. This new perspective allowed more accurate modelling of natural phenomena compared to traditional Euclidean geometry \citep{feder1988fractals}.

Fractals have diverse applications across a range of fields, including physiology \cite{goldberger2002fractal}, finance \cite{mandelbrot1997fractals}, computer graphics \cite{ebert2002texturing}, geology, and geophysics \cite{turcotte1997fractals}. The study of fractals provides a mathematical framework to help us understand the complexity of the world around us, bridging abstract theory and observable reality. Barnsley explored the fractal theory, which includes the approximation theory in \citep{barnsley1986fractal}. Barnsley's research significantly influenced the study of a group of fractal interpolation functions (FIFs) called the $\alpha$-fractal functions, which are associated with the continuous function defined on a compact interval of real numbers. 

Since Barnsley's groundbreaking work, there has been growing interest in utilizing FIFs to approximate functions. A collection of non-affine fractal functions $f^\alpha$, which approximates and interpolates a continuous function $f$ on a compact interval of $\mathbb{R}$, was investigated by \citep{chand2006generalized, jha2021approximation,navascues2010fractal,navascues2005fractal}. 
FIFs have found applications in a variety of areas. For example, Verma and Kumar \cite{verma2024fractal} analysed the performance of financial institutions after mergers and acquisitions. They \cite{verma2024fractal} obtained coefficients using a regression model, subsequently using these coefficients in an FIF to study performance. Further applications of FIF can be found in \citep{wang2019novel,raubitzek2021fractal, manousopoulos2023financial,agrawal2023dimensional,husain2024novel}.

 Fractals exhibit distinct geometric properties such as self-similarity and scale-free characteristics. These properties are often analyzed through dimensional analysis of fractal patterns. The fractal dimension is crucial for measuring small parts of fractals \cite{ida1998image}, identifying shapes \cite{neil1997shape}, and quantifying the roughness of fractal surfaces \cite{nayak2019analysing}. Varying definitions of fractal dimensions are applicable in specific contexts. Hence, various methods have been suggested to estimate the fractal dimension, including the Higuchi fractal dimension \cite{higuchi1988approach}, texture fractal dimension \cite{chaudhuri1995texture}, aggregates fractal dimension \cite{jiang1991fractal}, and the more widely used box-counting dimension \cite{li2009improved}. The box-counting dimension is frequently employed in various applications due to its simplicity and automatic computability, as seen in (\citep{foroutan1999advances,peitgen2004chaos,xu2006new,jiang2012box,so2017enhancement}).

This study concentrates on constructing the $\alpha$-fractal function using the Suzuki-type $\varphi$-generalized contraction mapping (STGPC) on a complete metric space. This new class of mapping is a generalization of the Banach contraction (BC) mapping. Pant's paper \cite{pant2018fixed} established some fixed-point results on a complete metric space using the STGPC. Here, we describe the construction of a FIF from an iterated function system (IFS), as introduced in  \cite{barnsley1986fractal}  and detailed in \citep{barnsley1985iterated}. Fractals are formed through Iterated Function Systems (IFS), which involve a finite number of Banach contractions. Fractal functions display similar behaviour, indicating the noteworthy influence of the BC principle in the development of diverse FIFs. Existing literature explores numerous other contraction mappings and fixed-point theorems that establish the existence and uniqueness of fixed points  \citep{sahu2010k,uthayakumar2014iterated,thangaraj2024generation}. This study introduces the fractal interpolation function on a complete metric space using the STGPC. Moreover, we construct the fractal operator through the STGPC using the IFS. This fractal operator can approximate any given continuous function using an $\alpha$-fractal function.

The rest of the paper is organised as follows: In Section \ref{bg}, we will describe the fundamental concepts of IFS and $\alpha$-fractal functions. Section \ref{FIF_S}  will give the definition of Suzuki-type generalized $\varphi$-contraction mapping (STGPC) with some concrete examples with some fixed point results to construct the IFS involving STGPC presented. The construction of IFS using the STGPC map is presented in subsection \ref{IFS_construction}. Moreover, the construction of $\alpha$-FIF using STGPC and the box-dimensional analysis are presented in subsections \ref{S_FIF} and \ref{B_D}, respectively. Finally, in Section \ref{cs}, we present a case study that examines the fluctuations of spinach in a highly volatile market using the fractal interpolation function. The study focuses on the price data from the Azadpur market in New Delhi, India, from September 2023 to July 2024. We also include a box-dimension analysis to understand the complexity of the price fluctuations during this time frame. By using a fractal representation of the average monthly data, we gain a unique perspective on understanding the high volatility of the prices.


\section{Background and Preliminaries}\label{bg}

This section provides a survey of the relevant definitions and results related to contraction mappings and iterated function systems (IFS). 
 
\subsection{Iterated function systems (IFS)}
Consider $A=[y_0,y_P]$ and $A_p=[y_{p-1},y_p]$. Let the given data set be $\lbrace (y_p,z_p)  \in A\times \mathbb{R}:p=0,1,\ldots,P\rbrace$ with strictly increasing abscissae.
Let $L_p : A \rightarrow A_p$ be the $p$ contractive homeomorphisms defined by $L_p(y) = a_p y+b_p$, where $a_p$ and $b_p$ are unknown parameters, determined in (\cite{barnsley1986fractal}) using the following conditions, 
\begin{align}\label{affine}
    L_p(y_0)=y_{p-1},~~~L_p(y_P)=y_p, ~~\forall~ p=1,2,\ldots,P.
\end{align}
Let $F_p: K (:= A \times \mathbb{R}) \rightarrow \mathbb{R}$ denote the functions that are continuous in the first variable and contractive in the second variable satisfying
\begin{align}\label{F_p}
F_p(y_0, z_0)= z_{p-1},~~~F_p(y_P, z_P )=z_p.  
\end{align}
Define the map $w_p : K (:= A \times \mathbb{R})\rightarrow K 
$ by $w_p(y, z) = (L_p(y), F_p(y, z))$ for all $p \in P$. Then, the system defined by 
\begin{align}\label{IFS}
\lbrace K;w_p: p=1,2,\ldots, P \rbrace
\end{align}
is called the IFS with a finite number of contraction mappings, and the attractor of the IFS is a unique invariant set $\mathcal{A}$ such that $\mathcal{A}=  \displaystyle \bigcup^{P}_{p=1} w_p (\mathcal{A})$. The set $\mathcal{A}$ is the graph of a continuous function $g: A \rightarrow \mathbb{R} $ which satisfies $g(y_p)=y_p, p=1,2,\ldots, P$. The function $g$ is referred to as the FIF or simply the fractal function associated with the IFS \eqref{IFS}, which satisfies the equation, 
\begin{align}\label{f_Fn}
g(L_p(y))=F_p(y,g(y)),~~~~\forall ~y\in \mathcal{A}_p,~p\in P.  
\end{align}

The most common IFS is constructed using the following maps $L_p(y)$ and $F_p(y,z)$, which are defined as follows:
\begin{align}\label{ln_Fn}
L_p(y) = a_p y+b_p~~~~F_p(y,z)= \alpha_p z + q_p(y),~~p\in P,
\end{align}
where $q_p$ are continuous functions that meet the endpoint conditions with $F_p$, and $\alpha_p$'s are the vertical scaling factors of the mappings $w_p$'s. The scale vector $\alpha=(\alpha_1,\alpha_2,\ldots,\alpha_p)$ is known as the scaling factors associated to IFS \eqref{IFS}.

The development of $\alpha$-FIF with constant and function vertical scaling factors, as detailed in \cite{wang2013fractal, barnsley1989hidden, vijender2019approximation, navascues2005fractal, akhtar2017box}, significantly enhances the versatility and flexibility of the FIF. This approach offers greater freedom in approximation. Additionally, the application of the BC principle ensures both the existence and uniqueness of the FIF, as outlined in the following theorem.

\begin{theorem}[Theorem 1, \citep{barnsley1986fractal}]
Let $\mathcal{C}$ be the space of continuous functions and $\lbrace K;w_p: p=1,2,\ldots, P \rbrace$ be an IFS as defined in \eqref{IFS}. Define  $g:A\rightarrow \mathbb{R}$ with $h(y_0)=z_0, h(y_P)=z_P$ associated with the metric  $d(h,f)=\max\lbrace|h(y)-f(y)|:y\in A \rbrace$. Then $(\mathcal{C},d)$ is a complete metric space.

Define the Read-Bajaraktarevic operator $\mathbb{T}$ on $(\mathcal{C},d)$ as $(\mathbb{T}h)(y)=F_p(L_p^{-1}(y), h(L_p^{-1}(y))),~\forall~y \in A_p,~p=1,2,\ldots,P$. Then: 
\begin{enumerate}
    \item The iterated function system \eqref{IFS} possesses a unique attractor $G_g$, which represents the graph of $g:A\rightarrow \mathbb{R}$, interpolating the data $\lbrace (y_p,z_p)  \in A\times \mathbb{R}:p=0,1,\ldots,P\rbrace$.
    \item $\mathbb{T}$ satisfies the conditions of BC on complete metric space $(\mathcal{C},d)$ and it has a fixed point $g$ which satisfies the equation,
\end{enumerate}
\begin{align}\label{FIF}
g(y)=\alpha_p g(L_p^{-1}(y))+q_p(L_p^{-1}(y)),~~y\in A_p,~p\in P. 
\end{align}
The function $g$ in \eqref{FIF} is called the FIF associated with $\lbrace (y_p,z_p)  \in A\times \mathbb{R}:p=0,1,\ldots, P\rbrace$.
\end{theorem}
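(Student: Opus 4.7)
The plan is to split the argument into three classical pieces: verify that $(\mathcal{C},d)$ is a complete metric space, show that the Read--Bajaraktarevic operator $\mathbb{T}$ is a self-map of $\mathcal{C}$ that is a strict contraction, apply the Banach contraction principle to obtain the FIF $g$, and finally identify its graph with the attractor of the IFS \eqref{IFS}.

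For completeness, I would realize $\mathcal{C}$ as the subset of $(C(A,\mathbb{R}),\|\cdot\|_\infty)$ cut out by the two endpoint conditions $h(y_0)=z_0$ and $h(y_P)=z_P$; since evaluation at a point is a continuous linear functional, this is a closed subspace of a complete space and hence complete under the uniform metric $d$. The self-map property of $\mathbb{T}$ then requires three checks: (i) on each subinterval $A_p$, $\mathbb{T}h=F_p(L_p^{-1}(\cdot),h(L_p^{-1}(\cdot)))$ is continuous as a composition of continuous maps; (ii) at each interior junction $y_p$, I use the endpoint identities \eqref{F_p}: approaching $y_p$ from $A_p$ gives $F_p(y_P,h(y_P))=F_p(y_P,z_P)=z_p$, while approaching from $A_{p+1}$ gives $F_{p+1}(y_0,h(y_0))=F_{p+1}(y_0,z_0)=z_p$, so the two branches agree; (iii) evaluating at the outer endpoints gives $\mathbb{T}h(y_0)=z_0$ and $\mathbb{T}h(y_P)=z_P$ by the same identities. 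This ensures $\mathbb{T}h\in\mathcal{C}$.

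For the contraction estimate, I would specialize to the standard affine form \eqref{ln_Fn} so that $F_p(y,z)=\alpha_p z+q_p(y)$. For any $h,f\in\mathcal{C}$ and any $y\in A_p$,
\begin{equation*}
\lvert(\mathbb{T}h)(y)-(\mathbb{T}f)(y)\rvert
=\lvert\alpha_p\rvert\,\bigl\lvert h(L_p^{-1}(y))-f(L_p^{-1}(y))\bigr\rvert
\le \lvert\alpha_p\rvert\,d(h,f).
\end{equation*}
Taking the supremum over $y\in A$ and $p\in\{1,\dots,P\}$ yields $d(\mathbb{T}h,\mathbb{T}f)\le\bigl(\max_p\lvert\alpha_p\rvert\bigr)\,d(h,f)$, which is a strict contraction under the hypothesis $\lvert\alpha_p\rvert<1$. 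The Banach contraction principle then delivers a unique fixed point $g\in\mathcal{C}$ satisfying \eqref{FIF}. Plugging $y=y_p$ into the fixed-point equation together with \eqref{affine}--\eqref{F_p} gives $g(y_p)=z_p$, so $g$ interpolates the data.

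For the attractor statement I would set $G_g=\{(y,g(y)):y\in A\}$ and verify $G_g=\bigcup_{p=1}^{P}w_p(G_g)$ directly: any point $(y,g(y))$ with $y\in A_p$ can be written $y=L_p(\tilde y)$ for some $\tilde y\in A$, and the fixed-point equation \eqref{FIF} rewrites as $g(y)=F_p(\tilde y,g(\tilde y))$, so $(y,g(y))=w_p(\tilde y,g(\tilde y))\in w_p(G_g)$; the reverse inclusion is immediate from the same identity. Hutchinson's theorem (the standard IFS fixed-point result in the Hausdorff metric on compact subsets of $K$) guarantees uniqueness of the invariant set, so $G_g$ is \emph{the} attractor. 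The main technical obstacle in the whole argument is step (ii) above, the continuity of $\mathbb{T}h$ at the interior knots $y_p$; it is precisely there that the otherwise mysterious endpoint conditions \eqref{F_p} earn their keep. Once that matching is in hand, the contraction bound and the invariance calculation are one-line verifications.
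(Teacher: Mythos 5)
The paper itself gives no proof of this theorem: it is quoted verbatim as background from Barnsley's 1986 paper, so there is nothing internal to compare against. Your proposal reconstructs the classical argument in essentially the standard way, and the parts you work out are correct: completeness of $\mathcal{C}$ as a closed subspace of $(C(A,\mathbb{R}),\|\cdot\|_\infty)$, the self-map property of $\mathbb{T}$ via the matching conditions \eqref{F_p} at the interior knots (you are right that this is where those conditions ``earn their keep''), the contraction estimate $d(\mathbb{T}h,\mathbb{T}f)\le(\max_p|\alpha_p|)\,d(h,f)$, the interpolation property, and the direct verification that $G_g=\bigcup_{p=1}^{P}w_p(G_g)$.

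There is one genuine gap, in the final step. You invoke ``Hutchinson's theorem'' to conclude that $G_g$ is \emph{the unique} attractor, but Hutchinson's theorem requires each $w_p$ to be a contraction on $K$ with respect to some complete metric, and the maps $w_p(y,z)=(L_p(y),\alpha_p z+q_p(y))$ are in general \emph{not} contractions in the Euclidean metric on $K$: the Lipschitz constant of $q_p$ can be arbitrarily large, so $\|w_p(y,z)-w_p(y',z')\|$ can exceed $\|(y,z)-(y',z')\|$ when $y\neq y'$. Barnsley's proof handles this by introducing the weighted metric $d_\theta\bigl((y,z),(y',z')\bigr)=|y-y'|+\theta|z-z'|$ and choosing $\theta>0$ small enough that every $w_p$ becomes a contraction with respect to $d_\theta$ (which is equivalent to the Euclidean metric, so completeness and the Hausdorff-metric machinery carry over). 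Without this step your direct computation only shows that $G_g$ is \emph{an} invariant compact set; uniqueness of the invariant set, i.e.\ part 1 of the theorem, is not yet justified. Adding the $d_\theta$ construction closes the argument.
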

In the following section, we will construct the iterated function system and FIF with STGPC.  
\section{Fractal interpolation function using Suzuki type generalized $\varphi$- contractions}\label{FIF_S}

Numerous extensions of the BC principle can be found in the existing literature; see \citep{kannan1969some, ciric1971generalized, chatterjea1972fixed}. One of the most fascinating extensions was presented by Suzuki in \cite{suzuki2008generalized}. The fixed-point result is as follows: 
\begin{theorem}[Theorem 2, \citep{suzuki2008generalized}]
Let $(X, d)$ represent a complete metric space, and let $T$ denote a mapping on $X$. We define a non-increasing function $\theta: [0,1) \rightarrow (1/2, 1] $,
 \begin{align*}
     \theta(r)=
\begin{cases}
    1, & 0\leq r \leq \frac{\sqrt{5}-1}{2},\\
    \frac{1-r}{r^2}, & \frac{\sqrt{5}-1}{2} \leq r \leq \frac{1}{\sqrt{2}},\\
    \frac{1}{1+r}, & \frac{1}{\sqrt{2}} \leq r \leq 1.
\end{cases}
 \end{align*}
If there exists $r\in [0,1)$, such that
 \begin{align*}
     \theta(r)d(y,Ty)\leq d(y,z) \implies d(Ty,Tz)\leq r d(y,z),
 \end{align*}
 for all $y,z\in X$, then there is a unique fixed point $y\prime$ of $T$. Moreover, $\lim_n T^n y=y\prime$ for all $y\in X$.
\end{theorem}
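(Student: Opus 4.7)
The plan is to run a Picard-type iteration and then handle the implicational contractive condition via a Suzuki-style dichotomy. I would begin by fixing an arbitrary starting point $y_0\in X$ and setting $y_n:=T^n y_0$. Since $\theta(r)\leq 1$ on $[0,1)$, the premise $\theta(r)\,d(y_n,y_{n+1})\leq d(y_n,y_{n+1})$ is automatic, so applying the hypothesis with $y=y_n$ and $z=y_{n+1}$ gives the uniform geometric decay $d(y_{n+1},y_{n+2})\leq r\,d(y_n,y_{n+1})$. Iterating and summing via the triangle inequality then forces $\{y_n\}$ to be Cauchy, and by completeness it converges to some $y^{*}\in X$.

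The central step is to show $Ty^{*}=y^{*}$. One cannot simply apply the contractive conclusion with $z=y^{*}$ because the premise $\theta(r)\,d(y_n,y_{n+1})\leq d(y_n,y^{*})$ may fail for individual $n$. My strategy would be to establish the dichotomy: for every $n$, at least one of $\theta(r)\,d(y_n,y_{n+1})\leq d(y_n,y^{*})$ or $\theta(r)\,d(y_{n+1},y_{n+2})\leq d(y_{n+1},y^{*})$ must hold. Using the contractive conclusion on whichever side is valid then produces a subsequence $(y_{m_k})$ with $d(y_{m_k+1},Ty^{*})\to 0$; combined with $y_{m_k+1}\to y^{*}$, this yields $Ty^{*}=y^{*}$. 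Uniqueness is then immediate: for any fixed point $z^{*}$, the premise $0=\theta(r)\,d(y^{*},Ty^{*})\leq d(y^{*},z^{*})$ holds trivially, so $d(y^{*},z^{*})\leq r\,d(y^{*},z^{*})$, forcing $y^{*}=z^{*}$ since $r<1$. The convergence $T^n y\to y^{*}$ for an arbitrary starting point $y$ follows by repeating the Cauchy argument on the orbit of $y$ and identifying the limit via uniqueness.

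The main obstacle is proving the dichotomy uniformly across the three pieces in the definition of $\theta$. If both branches of the dichotomy failed, the triangle inequality together with $d(y_{n+1},y_{n+2})\leq r\,d(y_n,y_{n+1})$ would yield $d(y_n,y_{n+1})<\theta(r)(1+r)\,d(y_n,y_{n+1})$, contradicting $\theta(r)(1+r)\leq 1$. This bound holds by direct inspection only in the third subinterval $1/\sqrt{2}\leq r<1$, where $\theta(r)=1/(1+r)$ precisely by design. In the first two subintervals the simple two-term estimate is too weak, and one must extend the dichotomy to chains of three or more consecutive iterates, or use a monotone-decrease argument that exploits the characteristic equations $r+r^2=1$ and $2r^2=1$, which define the thresholds $(\sqrt{5}-1)/2$ and $1/\sqrt{2}$. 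Verifying that the piecewise formula for $\theta$ is sharp in each of the three regimes, so that the dichotomy closes in every case, is the technical heart of the proof.
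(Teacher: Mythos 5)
The paper does not prove this result itself (it is quoted verbatim from Suzuki's 2008 paper), so the comparison is against Suzuki's original argument. Your outer scaffolding is correct and matches it: since $\theta(r)\leq 1$ the premise holds automatically for the pair $(y_n,Ty_n)$, giving geometric decay, a Cauchy orbit, and a limit $z$; uniqueness and the convergence of arbitrary orbits then follow exactly as you say. Your dichotomy for the fixed-point step is also genuinely Suzuki's argument --- but only for the third regime $1/\sqrt{2}\leq r<1$, where $\theta(r)(1+r)=1$ makes the two-term contradiction close. You correctly identify that this is where the difficulty lies, but you leave the regimes $0\leq r<1/\sqrt{2}$ unresolved, and that is a genuine gap: it is precisely the part of the theorem that the piecewise definition of $\theta$ exists to handle.

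Moreover, your proposed repair (extending the dichotomy to chains of three or more consecutive iterates of the Picard orbit) cannot work. The contradiction in the dichotomy comes from the single triangle inequality $d(y_n,y_{n+1})\leq d(y_n,z)+d(z,y_{n+1})$, which only ever consumes two failed premises and therefore always yields the threshold $\theta(r)(1+r)\leq 1$, i.e.\ $\theta(r)\leq 1/(1+r)$; this is false throughout the first regime (where $\theta(r)=1$) and at interior points of the second. The missing idea is Suzuki's auxiliary lemma: for every $x\neq z$ one has $d(Tx,z)\leq r\,d(x,z)$, proved by choosing $n$ with $d(y_n,z)\leq \tfrac{1}{3}d(x,z)$ so that $\theta(r)d(y_n,Ty_n)\leq d(y_n,z)+d(y_{n+1},z)\leq d(x,z)-d(y_n,z)\leq d(y_n,x)$, and letting $n\to\infty$ in $d(y_{n+1},Tx)\leq r\,d(y_n,x)$. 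With this in hand one assumes $Tz\neq z$, iterates $z$ itself to get $d(T^{j}z,z)\leq r^{j-1}d(Tz,z)$, and applies the contractive condition to the pair $(T^2z,z)$: the premise $\theta(r)d(T^2z,T^3z)\leq d(T^2z,z)$ is verified from the lower bound $d(T^2z,z)\geq (1-r)d(z,Tz)$ together with $\theta(r)\leq (1-r)/r^2$ --- which is exactly what the first two branches of $\theta$ encode --- and the conclusion gives $d(z,Tz)\leq 2r^2 d(z,Tz)$, a contradiction since $2r^2<1$ there. Without this lemma and the $(T^2z,z)$ argument, your proof establishes the theorem only for $r\geq 1/\sqrt{2}$.
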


\begin{definition}[$\varphi$-contraction mapping,
\citep{jachymski1997equivalence}] Let $(X, d)$ be a metric space and $\varphi:[0,\infty )\rightarrow [0,\infty )$ is upper semi continuous from the right on $[0,\infty )$ and satisfies $\varphi(t)<t$ for all $t>0$. A self-mapping $ T:X \rightarrow X$ is said to be $\varphi$-contractive if 
 \begin{align}\label{phi_def}
    d(Ty,Tz)\leq \varphi (d(y,z))    
 \end{align}
 for all $y, z \in X$.
\end{definition}

The generalization of the BC principle  \cite{boyd1969nonlinear} is as follows:

\begin{theorem}[Theorem 1, \citep{boyd1969nonlinear}]\label{phi_thm}
In a complete metric space $(X, d)$, if $T: X \rightarrow X$ is a $\varphi$-contractive map, then $T$ possesses a unique fixed point.   
\end{theorem}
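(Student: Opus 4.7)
The plan is to prove existence via Picard iteration and uniqueness by a direct contradiction. Fix any $x_0 \in X$ and let $x_{n+1} := T x_n$, with $c_n := d(x_n, x_{n+1})$. The $\varphi$-contractive condition \eqref{phi_def} immediately gives $c_{n+1} \le \varphi(c_n) < c_n$ whenever $c_n > 0$, so $(c_n)$ is monotonically decreasing to some limit $c \ge 0$. I would then rule out $c > 0$ by passing to the limit: upper semi-continuity from the right of $\varphi$ together with $c_n \downarrow c$ gives $\limsup_n \varphi(c_n) \le \varphi(c)$, and hence $c \le \varphi(c) < c$, a contradiction. So $c_n \to 0$.

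Next I would show $(x_n)$ is Cauchy, which is the main obstacle. Assuming the contrary, there exist $\varepsilon > 0$ and indices $m_k < n_k$ with $d(x_{m_k}, x_{n_k}) \ge \varepsilon$, where I choose $n_k$ minimal so that also $d(x_{m_k}, x_{n_k - 1}) < \varepsilon$. The triangle inequality then sandwiches
\begin{equation*}
\varepsilon \le d(x_{m_k}, x_{n_k}) \le d(x_{m_k}, x_{n_k - 1}) + c_{n_k - 1} < \varepsilon + c_{n_k - 1},
\end{equation*}
so $d(x_{m_k}, x_{n_k}) \to \varepsilon$ \emph{from above}. Using the triangle inequality together with $\varphi$-contractivity applied to the pair $(x_{m_k}, x_{n_k})$ yields
\begin{equation*}
d(x_{m_k}, x_{n_k}) \le c_{m_k} + \varphi\bigl(d(x_{m_k}, x_{n_k})\bigr) + c_{n_k}.
\end{equation*}
Passing to the limit and invoking upper semi-continuity from the right (which is precisely why approaching $\varepsilon$ from above matters) gives $\varepsilon \le \varphi(\varepsilon) < \varepsilon$, the desired contradiction. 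Thus $(x_n)$ is Cauchy, and completeness of $X$ provides a limit $x^\ast \in X$.

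Finally I would verify $x^\ast$ is the sought fixed point and that it is unique. Continuity of $T$ follows from $\varphi$-contractivity since $\varphi(t) < t$ forces $\varphi(t) \to 0$ as $t \to 0$; passing to the limit in $x_{n+1} = T x_n$ then yields $x^\ast = T x^\ast$. For uniqueness, if $y^\ast$ is another fixed point with $y^\ast \ne x^\ast$, then $d(x^\ast, y^\ast) = d(Tx^\ast, Ty^\ast) \le \varphi(d(x^\ast, y^\ast)) < d(x^\ast, y^\ast)$, which is impossible. The only delicate ingredient throughout is the one-sided semi-continuity assumption, which is exactly tailored to support the two limit-passing steps above.
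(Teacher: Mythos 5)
Your proposal is correct: it is essentially the original Boyd--Wong argument, with the two limit-passing steps (forcing $c_n \to 0$ and refuting the non-Cauchy alternative) each correctly leaning on upper semi-continuity from the right, which is applicable precisely because in both cases the relevant quantities approach their limits from above. The paper itself states this theorem as a cited result from \citep{boyd1969nonlinear} and supplies no proof, so there is nothing to contrast with; your reconstruction fills that gap faithfully, and the minor points you gloss over (that $n_k > m_k + 1$ eventually since $c_n \to 0$, and that $T$ is continuous simply because $d(Tx,Ty) \le \varphi(d(x,y)) \le d(x,y)$) are routine to supply.
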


\begin{definition}[Suzuki-type generalized $\varphi$-contraction mapping,
\citep {pant2018fixed}] Let $(X, d)$ be a metric space. A self-mapping $T:X\rightarrow X$ will be called a STGPC for all $y,z\in X$ if
\begin{align}\label{Suzuki_def}
    \frac{1}{2} d(y,Ty)\leq d(y,z)~~ \mbox{implies that}~~ d(Ty,Tz)\leq \varphi(m(y,z)), 
\end{align}
where $m(y,z)=\max \lbrace d(y,z), d(y,Ty), d(z,Tz) \rbrace$ and $\varphi:[0,\infty )\rightarrow [0,\infty )$ satisfies $\varphi(t)<t$ for all $t>0$ and $\limsup\limits_{s\rightarrow t^+} \varphi(s)<t$ for all $t>0$.
\end{definition}
The following theorem generalizes Theorem \eqref{phi_thm}. As we observe later in example \ref{ex_conti} is not $\varphi$-contractive but STGPC.  
\begin{theorem}[Theorem 2.2, \citep {pant2018fixed}]\label{fp_SC}
In a complete metric space $(X, d)$, if $T:X \rightarrow X$ is an STGPC mapping, then T possesses a unique fixed point in X.
\end{theorem}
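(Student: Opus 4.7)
The plan is to follow the standard Picard-iteration approach adapted to the Suzuki--$\varphi$ framework. Fix an arbitrary $y_0 \in X$ and define the orbit $y_{n+1} = T y_n$. The first step is to establish monotone decrease of the successive-displacement sequence $d_n := d(y_n, y_{n+1})$. Since $\tfrac{1}{2} d(y_n, Ty_n) = \tfrac{1}{2} d_n \leq d_n = d(y_n, y_{n+1})$, the Suzuki precondition in \eqref{Suzuki_def} is automatically satisfied along the orbit, giving $d_{n+1} \leq \varphi(m(y_n, y_{n+1}))$ with $m(y_n, y_{n+1}) = \max\{d_n, d_{n+1}\}$. A short case analysis rules out $d_{n+1} > d_n$ (else $d_{n+1} \leq \varphi(d_{n+1}) < d_{n+1}$), so $d_{n+1} \leq \varphi(d_n) < d_n$ whenever $d_n > 0$. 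Monotonicity gives $d_n \searrow c$ for some $c \geq 0$; if $c > 0$, then $d_n \to c^+$ and passing to $\limsup$ in $d_{n+1} \leq \varphi(d_n)$ together with the hypothesis $\limsup_{s \to c^+} \varphi(s) < c$ yields $c < c$, a contradiction. Hence $d_n \to 0$.

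The main obstacle is showing $\{y_n\}$ is Cauchy. I would argue by contradiction in the standard way: assuming otherwise, there exist $\varepsilon > 0$ and index sequences $n_k < m_k$ with $d(y_{n_k}, y_{m_k}) \geq \varepsilon$ and $d(y_{n_k}, y_{m_k - 1}) < \varepsilon$. Combining with $d_n \to 0$, one checks that $d(y_{n_k}, y_{m_k}) \to \varepsilon$ from above, and that the shifted distance $d(y_{n_k+1}, y_{m_k+1})$ and the quantity $m(y_{n_k}, y_{m_k})$ both converge to $\varepsilon$ from above as well. Since $d_{n_k} \to 0 < \varepsilon \leq d(y_{n_k}, y_{m_k})$, the Suzuki precondition $\tfrac{1}{2} d(y_{n_k}, T y_{n_k}) \leq d(y_{n_k}, y_{m_k})$ holds for all large $k$, so
\[
d(y_{n_k+1}, y_{m_k+1}) \leq \varphi\bigl( m(y_{n_k}, y_{m_k}) \bigr).
\]
Taking $\limsup$ and invoking $\limsup_{s \to \varepsilon^+} \varphi(s) < \varepsilon$ produces $\varepsilon \leq \limsup_k \varphi(\,\cdot\,) < \varepsilon$, the desired contradiction. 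The delicate bookkeeping here is ensuring the arguments of $\varphi$ approach $\varepsilon$ from the right, which is why the triangle-inequality estimates are chosen so that each distance is at least $\varepsilon$ for large $k$.

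Once $\{y_n\}$ is Cauchy, completeness gives $y_n \to y^\ast$. To show $T y^\ast = y^\ast$, I would use the familiar Suzuki dichotomy: for every $n$, at least one of
\[
\tfrac{1}{2} d(y_n, Ty_n) \leq d(y_n, y^\ast), \qquad \tfrac{1}{2} d(y_{n+1}, T y_{n+1}) \leq d(y_{n+1}, y^\ast)
\]
must hold, since otherwise the triangle inequality combined with $d_{n+1} \leq d_n$ yields $d_n < d_n$. Passing to an infinite subsequence on which one side holds and applying \eqref{Suzuki_def} gives, say, $d(y_{n_k+1}, T y^\ast) \leq \varphi\bigl( m(y_{n_k}, y^\ast) \bigr)$, where $m(y_{n_k}, y^\ast) \to d(y^\ast, T y^\ast)$. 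If $d(y^\ast, T y^\ast) > 0$, letting $k \to \infty$ and using $\varphi(t) < t$ together with the $\limsup$ control yields a contradiction; hence $T y^\ast = y^\ast$. Uniqueness is immediate: if $u \neq v$ were two fixed points, then $\tfrac{1}{2} d(u, Tu) = 0 \leq d(u,v)$ trivially, so $d(u,v) = d(Tu, Tv) \leq \varphi(d(u,v)) < d(u,v)$, a contradiction.
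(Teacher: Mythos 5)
The paper does not prove this theorem; it is imported verbatim from Pant (2018) as a cited result, so there is no in-paper proof to compare against. Your argument is a correct, self-contained proof and follows essentially the same standard route as the cited source: Picard iteration with the automatic Suzuki precondition $\tfrac{1}{2}d(y_n,Ty_n)\leq d(y_n,y_{n+1})$ to force $d_n\downarrow 0$, the minimal-index Cauchy contradiction using $\limsup_{s\to\varepsilon^+}\varphi(s)<\varepsilon$, the Suzuki dichotomy to transfer the contraction to the limit point, and the trivial precondition at a fixed point for uniqueness. The only point worth tightening is the Cauchy step: what matters is that the argument of $\varphi$, namely $m(y_{n_k},y_{m_k})$, stays $\geq\varepsilon$ (which it does, since $d(y_{n_k},y_{m_k})\geq\varepsilon$), and one should split off the case where it equals $\varepsilon$ exactly, handled by $\varphi(\varepsilon)<\varepsilon$, from the case where it exceeds $\varepsilon$, handled by the right $\limsup$ condition; the shifted distance $d(y_{n_k+1},y_{m_k+1})$ need not approach $\varepsilon$ from above, but that is harmless since only its limit is used.
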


The following examples demonstrate that the STGPC map does not have to be a Banach contraction. Since interpolation theory is closely associated with continuous functions, therefore, to apply the concept of fractal functions using the STGPC, the STGPC map must be a continuous function. Example \ref{ex_conti} confirms the existence of a continuous Suzuki generalized $\varphi$-map, which facilitates the construction of FIF through STGPC in the present study. On the other hand example \ref{ex_disconti} illustrates that an STGPC may not be a continuous map.

\begin{example}\label{ex_conti}
Let $X = \mathbb{R}$ be endowed with the metric $d(y,z)=|y-z|$. Then $(X, d)$ is a complete metric space. Define $T : X \rightarrow X$ and $ \varphi : [0, \infty) \rightarrow [0, \infty)$ defined by $\varphi(t)=\frac{t}{2}$.
\begin{align*}
T(y)=
\begin{cases}
    ~~0 &if~ y\leq 4,\\
    2y-8 & if ~y\in [4,5],\\
    -\frac{y}{2}+\frac{9}{2} & if ~y\in [5,7],\\
    -y+8& if ~y\in [7,8],\\
    ~~~0 &if~~ y\geq 8.
\end{cases}
\end{align*}
We will verify that $T$ satisfies the Suzuki-type generalized $\varphi$-contraction condition \eqref{suzuki_condtion} and $T$ does not satisfy theorem \ref{phi_thm} and hence not a $\varphi$-contraction map. 

It is clear that $\varphi(t) = \frac{t}{2} < t $ for $t > 0$.
\begin{align}\label{suzuki_condtion}
\mbox{If}~~\frac{1}{2} d(y, Ty) \leq d(y, z)~~ then~~ d(Ty, Tz) \leq \varphi(\max\{d(y, z), d(y, Ty), d(z, Tz)\}    
\end{align}
is satisfied for all $y, z \in \mathbb{R}$. Consider $\mathcal{M}=\max \lbrace d(y, z), d(y, Ty), d(z, Tz) \rbrace$ from equation \eqref{suzuki_condtion}.
We will show that $T$ is a Suzuki-type generalized $\varphi$-contraction mapping for all the possible cases in the domain of T,
\begin{enumerate}
    \item If $y, z \leq 4$, then \eqref{suzuki_condtion} trivially holds.

    \item If $y\leq 4$ and $z \in [4,5] $, then
    \begin{align*}
        &Ty=0,~~~~~~~ Tz=2z-8,\\
        &|Ty-Tz|=|2z-8|\leq 2,~~|y-Ty|=|y|\leq 4,\\
        &|z-Tz|=|-z+8|\leq 4,~~|y-z|\leq 1,\\
        &\mathcal{M}=4 ~\mbox{then}~\varphi(\mathcal{M})=\frac{4}{2}
    \end{align*}
and \eqref{suzuki_condtion} holds.  

    \item If $y\leq 4$ and $z \in [5,7] $, then
    \begin{align*}
        &Ty=0,~~~~~~~~ Tz=\frac{-z}{2}+\frac{9}{2},\\
        &|Ty-Tz|=|2z-8|\leq 2,~~|y-Ty|=|y|\leq 4,\\
        &|z-Tz|=|-z+8|\leq 3,~~|y-z|\leq 3,\\
        &\mathcal{M}=4 ~\mbox{then}~\varphi(\mathcal{M})=\frac{4}{2}=2
    \end{align*}
and \eqref{suzuki_condtion} holds.

    \item If $y\leq 4$ and $z \in [7,8] $, then
    \begin{align*}
        &Ty=0,~~~~~~~~ Tz=-z+8,\\
        &|Ty-Tz|=|z-8|\leq 1,~~|y-Ty|=|y|\leq 4,\\
        &|z-Tz|=|2z-8|\leq 6,~~|y-z|\leq 4,\\
        &\mathcal{M}=6 ~\mbox{then}~\varphi(\mathcal{M})=\frac{6}{2}=3
    \end{align*}
and \eqref{suzuki_condtion} holds.

\item If $y \leq 4, z \geq 8 $, then \eqref{suzuki_condtion} trivially holds.
\end{enumerate}
Similarly, we can show that T satisfies the condition \eqref{suzuki_condtion} for the remaining cases. A summary of all possible cases is included in table \eqref{cases_25}. The map $T(x)$ meets the condition \eqref{suzuki_condtion} for all cases outlined in table \eqref{cases_25}. Hence, $T$ is a continuous Suzuki-type generalized $\varphi$-contraction map for all points in $X$.

\begin{table}[htp]
    \centering
    \begin{adjustbox}{width=1.1\textwidth} 
    \begin{tabularx}{\textwidth}{c*{4}{X}}\toprule
 Case No.  &  Cases  & $Ty$  & $Tz$      \\ \hline
    1.& If $y \leq 4$ and $ z \leq 4$    &$Ty=0$  &$Tz=0$   \\
    2.& If $y\leq 4$ and $z \in [4,5]$    &$Ty=0$   &$Tz=2z-8$    \\
    3.& If $y\leq 4$ and $z \in [5,7]$    &$Ty=0$   &$Tz=\frac{-z}{2}+\frac{9}{2}$    \\
     4.&If $y\leq 4$ and $z \in [7,8] $   &$Ty=0$   &$Tz=-z+8$    \\
     5.&If $y \leq 4$ and $ z \geq 8 $    &$Ty=0$   &$Tz=0$ \\
     6.&If $y\in [4,5]$ and $z\leq 4 $   &$Ty=2y-8$  & $Tz=0$   \\
     7.&If $y\in [4,5]$ and $z\in [4,5] $   &$Ty=2y-8$  &$Tz=2z-8$    \\
    8.&If $y\in [4,5]$ and $z\in [5,7] $    &$Ty=2y-8$  &$Tz=\frac{-z}{2}+\frac{9}{2}$     \\
   9.& If $y\in [4,5]$ and $z\in [7,8] $    &$Ty=2y-8$  & $Tz=-z+8$     \\
    10.& If $y\in [4,5]$ and $z\geq 8 $    &$Ty=2y-8$  & $Tz=0$    \\
    11.& If $y\in [5,7]$ and $z\leq 4 $   &$Ty=\frac{-y}{2}+\frac{9}{2}$   &$Tz=0$    \\
    12.& If $y\in [5,7]$ and $z\in [4,5] $    &$Ty=\frac{-y}{2}+\frac{9}{2}$   &$Tz=2z-8$     \\
    13.& If $y\in [5,7]$ and $z\in [5,7] $    &$Ty=\frac{-y}{2}+\frac{9}{2}$   &$Tz=\frac{-z}{2}+\frac{9}{2}$    \\
    14.& If $y\in [5,7]$ and $z\in [7,8] $    &$Ty=\frac{-y}{2}+\frac{9}{2}$   &$Tz=-z+8$    \\
    15.& If $y\in [5,7]$ and $z\geq 8 $    &$Ty=\frac{-y}{2}+\frac{9}{2}$   &$Tz=0$    \\
    16.& If $y\in [7,8]$ and $z\leq 4 $   &$Ty=-y+8$  &$Tz=0$    \\
    17.& If $y\in [7,8]$ and $z\in [4,5] $    &$Ty=-y+8$  &$Tz=2z-8$   \\
    18.& If $y\in [7,8]$ and $z\in [5,7] $    &$Ty=-y+8$  &$Tz=\frac{-z}{2}+\frac{9}{2}$    \\
    19.& If $y\in [7,8]$ and $z\in [7,8] $    &$Ty=-y+8$  &$Tz=-z+8$    \\
    20.& If $y\in [7,8]$ and $z\geq 8 $    &$Ty=-y+8$  &$Tz=0$    \\
    21.& If $ y\geq 8 $ and $z\leq 4 $    &$Ty=0$  &$Tz=0$   \\
    22.& If $ y\geq 8 $ and $z\in [4,5] $    &$Ty=0$  &$Tz=2z-8$    \\
    23.& If $ y\geq 8 $ and $z\in [5,7] $    &$Ty=0$  &$Tz=\frac{-z}{2}+\frac{9}{2}$    \\
    24.& If $ y\geq 8 $ and $z\in [7,8] $    &$Ty=0$  &$Tz=-z+8$    \\
    25.& If $ y\geq 8 $ and $z\geq 8 $    &$Ty=0$  &$Tz=0$ \\
        \bottomrule
    \end{tabularx}
    \end{adjustbox}
    \caption{ $T$ satisfies the Suzuki condition \eqref{suzuki_condtion} for all the possible cases given in the table.}
    \label{cases_25}
\end{table}

Note that, for $y=4, z=\frac{9}{2}$,
\begin{align*}
    T(y=4)=0,T\left(y=\frac{9}{2}\right)=1~\mbox{and}~|d(Ty,Tz)|=1>\frac{1}{4}=\frac{\left|\frac{9}{2}-4\right|}{2}=\varphi(d(y,z)).
\end{align*}
Therefore, $T$ does not satisfy theorem \eqref{phi_thm}. That is, $T$ is not a $\varphi$-contraction mapping. However, $T$ is a continuous Suzuki-type generalized $\varphi$-contraction map.
\end{example}
 The Suzuki map is more generalized than the Banach map class, as it allows for the existence of fixed points in maps that are not continuous. We can verify this from the following example.
 \begin{example}\label{ex_disconti}
Let $X = \lbrace 0, 2\rbrace \cup \lbrace 1, 3, 5,\ldots \rbrace$ equipped with the metric $d(x,y)=|x-y|$. Then, $(X, d)$ forms a complete metric space. Define $T : X \rightarrow X$ and $ \varphi : [0, \infty) \rightarrow [0, \infty)$ as follows:

\begin{align*}
Ty=
\begin{cases}
    4, &if~ y=5\\
    0, &if~ y=7\\
    1, &otherwise
\end{cases}
~~~~~~~~~~~\mbox{and}~~~~~~~~~~~
\varphi(t)=
\begin{cases}
    \frac{t^2}{2}, &if~ t\leq 1\\
    t-\frac{1}{3}, &if~ t>1.
\end{cases}
\end{align*}
For $y=5,z=7$, we have
\begin{align*}
d(Ty,Tz)=d(T5, T7)=d(4,0)=4>\frac{5}{3}=\varphi(2)=\varphi(d(5,7))=\varphi(d(y,z)).
\end{align*}
Therefore, $T$ does not satisfy theorem \eqref{phi_thm}. That is, $T$ is not a $\varphi$-contraction mapping.

On the other hand,
\begin{align*}
\frac{1}{2} d(y,Ty)= \frac{1}{2} d(5,T5)=\frac{1}{2} d(5,4)=\frac{1}{2}<2=d(5,7)=d(y,z).
\end{align*}
Now,
\begin{align*}
d(Ty,Tz)&=d(T5,T7)=d(4,0)=4, \\[0.4cm]
m(y,z)&=\max \lbrace d(y,z), d(y,Ty), d(z,Tz) \rbrace,\\
&=\max \lbrace d(5,7), d(5,T5), d(7,T7) \rbrace,\\
&= \max \lbrace d(5,7), d(5,4), d(7,0) \rbrace,\\
&=\max \lbrace 2,1,7 \rbrace,\\
&=7,\\[0.2cm]
\varphi(m(y,z))=\varphi(7)&=7-\frac{1}{3}=\frac{20}{3}.
\end{align*}
This implies $d(Ty,Tz)\leq \varphi(m(y,z))$. Hence, the mapping $T$ satisfies the condition \eqref{suzuki_condtion}.
\end{example}
The Suzuki-type generalized $\varphi$-contraction mapping will be utilized for constructing iterated function systems. Despite not being a $\phi$-contraction mapping, the STGPC mapping still possesses a fixed point, as discussed in previous examples. The upcoming section will cover relevant results and the construction of IFS using STGPC.
\subsection{Iterated function system using Suzuki type generalized $\varphi$-contractive mapping }\label{IFS_construction}
The following section will focus on constructing an iterated function system using a STGPC. The approach will be based on the application of specific results.

Consider $T$ a Suzuki-type generalized $\varphi$-contraction map on a metric space $(X, d)$. Subsequently, we use the following results, whereby $T$ retains its characterization as a Suzuki-type generalized $\varphi$-contraction map on $\mathcal{C}(X)$, denoting the set of all nonempty compact subsets of $X$, with respect to the Hausdorff distance. The Hausdorff metric induced by $d$ is defined by
\begin{align*}
\mathcal{D}_H(\mathcal{X},\mathcal{Y})=\max \left\{ \sup_{x\in \mathcal{X}}  \inf_{y\in \mathcal{Y}}  d(x,y) , \sup_{y\in \mathcal{Y}}  \inf_{x\in \mathcal{X}}  d(x,y)  \right\}.
\end{align*}
for all $\mathcal{X}, \mathcal{Y} \in \mathcal{C}(X)$, where $d(a,\mathcal{Y})= \inf\limits_{b\in \mathcal{Y}}  d(a,b)$. For a mapping $T:X \to X$ and $D \subseteq X$ denote $T(D) = \cup_{x \in D} T(x)$.

\begin{Lemma}[Lemma 3.1, \cite{pant2018fixed}]
 Let $(X,d)$ be a metric space and $T: X\rightarrow X$ a continuous STGPC. Define $F_T:\mathcal{C}(X)\rightarrow \mathcal{C}(X)$ such that $\forall~ D \in \mathcal{C}(X),~~F_T(D):=T(D)$. Then, for all $\mathcal{X}, \mathcal{Y} \in \mathcal{C}(X)$,
\begin{align*}
    \frac{1}{2} \mathcal{D}_H(\mathcal{X},T(\mathcal{X}))\leq \mathcal{D}_H(\mathcal{X}, \mathcal{Y})~~ \mbox{implies}~~ \mathcal{D}_H(F_T(\mathcal{X}),F_T(\mathcal{Y}))\leq \varphi(M_T(\mathcal{X}, \mathcal{Y})),   
\end{align*}
for all $\mathcal{X}, \mathcal{Y} \in \mathcal{C}(X)$, where $M_T(\mathcal{X},\mathcal{Y})=\max \lbrace \mathcal{D}_H(\mathcal{X},\mathcal{Y}), \mathcal{D}_H(\mathcal{X},T(\mathcal{X})), \mathcal{D}_H(\mathcal{Y},T(\mathcal{Y})) \rbrace$. That is, the map $F_T$ satisfies the STGPC conditions (with the same $\varphi$).
\end{Lemma}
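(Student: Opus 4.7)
The plan is to lift the pointwise STGPC on $(X,d)$ to the hyperspace $(\mathcal{C}(X),\mathcal{D}_H)$ by reducing a set-level inequality to the pointwise STGPC applied at a carefully chosen witness pair. Since $T$ is continuous and $\mathcal{X},\mathcal{Y}$ are compact, $T\mathcal{X},T\mathcal{Y}\in\mathcal{C}(X)$, so every infimum and supremum defining the Hausdorff distance is attained. Writing
\[
\mathcal{D}_H(T\mathcal{X}, T\mathcal{Y}) = \max\Bigl\{\sup_{x\in\mathcal{X}} d(Tx, T\mathcal{Y}),\ \sup_{y\in\mathcal{Y}} d(Ty, T\mathcal{X})\Bigr\},
\]
symmetry reduces the task to bounding the first one-sided quantity by $\varphi(M_T(\mathcal{X},\mathcal{Y}))$.

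I would pick $x_0\in\mathcal{X}$ realizing the outer supremum $\sup_{x\in\mathcal{X}} d(Tx, T\mathcal{Y})$ and then $y_0\in\mathcal{Y}$ with $d(x_0,y_0)=d(x_0,\mathcal{Y})\leq \mathcal{D}_H(\mathcal{X},\mathcal{Y})$. The central technical step is to verify the pointwise Suzuki trigger $\tfrac{1}{2}d(x_0,Tx_0)\leq d(x_0,y_0)$; once it is in hand, the pointwise STGPC applied to $T$ gives
\[
d(Tx_0, Ty_0) \leq \varphi(m(x_0, y_0)),\qquad m(x_0, y_0)=\max\{d(x_0,y_0),\ d(x_0,Tx_0),\ d(y_0,Ty_0)\}.
\]
The remaining bookkeeping consists in showing $m(x_0,y_0)\leq M_T(\mathcal{X},\mathcal{Y})$: the first argument of the max by the choice of $y_0$, and the second and third via the extremality of $x_0$ combined with triangle-inequality routing through nearest points in $T\mathcal{X}$ and $T\mathcal{Y}$ respectively. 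This yields $\sup_{x\in\mathcal{X}} d(Tx,T\mathcal{Y}) \leq d(Tx_0,Ty_0) \leq \varphi(M_T(\mathcal{X},\mathcal{Y}))$, and the symmetric bound follows after interchanging $\mathcal{X}$ and $\mathcal{Y}$.

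The hardest part of the plan is promoting the set-level trigger $\tfrac{1}{2}\mathcal{D}_H(\mathcal{X}, T\mathcal{X})\leq \mathcal{D}_H(\mathcal{X},\mathcal{Y})$ to the pointwise trigger $\tfrac{1}{2}d(x_0,Tx_0)\leq d(x_0,y_0)$, because $\mathcal{D}_H$ only encodes the one-sided gaps $d(x,T\mathcal{X})$ and $d(Tx,\mathcal{X})$ and is blind to the direct displacement $d(x,Tx)$. For instance, if $T$ merely permutes the points of a finite set $\mathcal{X}$ then $\mathcal{D}_H(\mathcal{X},T\mathcal{X})=0$ while $d(x,Tx)$ may be arbitrarily large. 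The extremal selection of $x_0$ is what rescues the argument: in the easy case the trigger is immediate, while in the opposite case $d(x_0,y_0) < \tfrac{1}{2}d(x_0,Tx_0)$ I would argue, via a triangle-inequality routing of $Tx_0$ through a nearest point of $T\mathcal{Y}$ to $x_0$, that $d(Tx_0,T\mathcal{Y})$ is already absorbed by $\varphi(M_T)$, trivializing that case. A minor secondary technicality is that $\varphi$ is not assumed monotone, so the final step $\varphi(m(x_0,y_0))\leq \varphi(M_T)$ needs to be justified either through the right upper-semicontinuity of $\varphi$ or by arranging the estimate so that $M_T$ itself appears as the literal argument from the outset.
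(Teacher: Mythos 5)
The paper does not actually prove this lemma --- it is imported verbatim from \cite{pant2018fixed} --- so there is no in-paper argument to compare against; I can only assess your plan on its own terms. Your outline (reduce to the one-sided excess, select an extremal $x_0\in\mathcal{X}$ and a nearest $y_0\in\mathcal{Y}$, apply the pointwise STGPC) is the natural route, and you correctly locate the crux. But the crux is left unresolved. The set-level trigger $\tfrac{1}{2}\mathcal{D}_H(\mathcal{X},T\mathcal{X})\le\mathcal{D}_H(\mathcal{X},\mathcal{Y})$ genuinely does not yield the pointwise trigger $\tfrac{1}{2}d(x_0,Tx_0)\le d(x_0,y_0)$, and your proposed rescue of the bad case $d(x_0,y_0)<\tfrac{1}{2}d(x_0,Tx_0)$ does not work as described: routing $Tx_0$ through the point of $T\mathcal{Y}$ nearest to $x_0$ gives only $d(Tx_0,T\mathcal{Y})\le d(x_0,Tx_0)+d(x_0,T\mathcal{Y})$, and the bad-case hypothesis supplies a \emph{lower} bound $d(x_0,Tx_0)>2d(x_0,\mathcal{Y})$, not an upper bound, so nothing is ``absorbed by $\varphi(M_T)$.'' That case is not trivial, and the proof is incomplete without it.

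There is a second gap, which you partly gloss over: the bookkeeping $m(x_0,y_0)\le M_T(\mathcal{X},\mathcal{Y})$ fails in general. The quantities $d(x_0,Tx_0)$ and $d(y_0,Ty_0)$ are \emph{individual displacements}, whereas $M_T$ contains only the Hausdorff distances $\mathcal{D}_H(\mathcal{X},T\mathcal{X})$ and $\mathcal{D}_H(\mathcal{Y},T\mathcal{Y})$, which control $d(x,T\mathcal{X})$ --- the distance from $x$ to its \emph{nearest} image point --- and are blind to $d(x,Tx)$; routing through the nearest point $u\in T\mathcal{X}$ leaves the uncontrolled term $d(u,Tx_0)$, a diameter-type quantity inside $T\mathcal{X}$. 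Finally, the monotonicity worry you raise is real and is not repaired by the stated hypotheses: $\varphi(t)<t$ together with $\limsup_{s\to t^+}\varphi(s)<t$ does not give $\varphi(m)\le\varphi(M)$ for $m\le M$ (the paper itself later invokes ``monotonicity of $\varphi$'' in its Theorem 4 without listing it as an assumption). In sum, your proposal correctly names the obstacles but does not overcome any of them; closing the argument would require either strengthened hypotheses (e.g.\ nondecreasing $\varphi$ and a trigger/majorant phrased in terms of pointwise displacements rather than Hausdorff distances) or a substantially different idea.
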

\begin{Lemma}[Lemma 3.2\cite{pant2018fixed}]\label{CH_complete}
 In a metric space $(X, d)$, the space of continuous functions $\mathcal{C}(X)$ equipped with the Hausdorff metric $\mathcal{D}_H$ is a complete metric space.
\end{Lemma}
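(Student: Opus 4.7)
The plan is to follow the standard Hutchinson-type argument for completeness of the hyperspace: take an arbitrary Cauchy sequence $\{K_n\}_{n\geq 1}$ in $(\mathcal{C}(X),\mathcal{D}_H)$ and exhibit a nonempty compact set $K$ with $\mathcal{D}_H(K_n,K)\to 0$. The natural candidate is
\[
K := \Bigl\{x\in X : \text{there exist } x_n\in K_n \text{ with } x_n\to x \text{ in } X\Bigr\},
\]
which, when $X$ is complete, coincides with $\bigcap_{n\geq 1}\overline{\bigcup_{m\geq n}K_m}$. (Completeness of $X$ is the tacit hypothesis needed to make the statement true; note also that although the statement writes ``continuous functions'' the intended object, per the preceding definition, is the collection of nonempty compact subsets.)

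For nonemptiness, I would pass to a subsequence with $\mathcal{D}_H(K_{n_k},K_{n_{k+1}})<2^{-k}$ and, starting from any $x_{n_1}\in K_{n_1}$, inductively pick $x_{n_{k+1}}\in K_{n_{k+1}}$ satisfying $d(x_{n_k},x_{n_{k+1}})<2^{-k}$; this uses that the Hausdorff distance controls the one-sided infimum $\inf_{y\in K_{n_{k+1}}}d(x_{n_k},y)$. The resulting sequence is Cauchy in $X$, so completeness of $X$ produces a limit in $K$. For compactness, closedness of $K$ follows directly from its definition as a set of limits. For total boundedness, fix $\varepsilon>0$, choose $N$ so large that $\mathcal{D}_H(K_n,K_N)<\varepsilon/3$ for all $n\geq N$, cover the compact set $K_N$ by finitely many balls of radius $\varepsilon/3$, and argue via a telescoping selection that every point of $K$ lies within $2\varepsilon/3$ of one of these centres. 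Closedness inside a complete ambient space together with total boundedness then yields compactness.

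Finally, convergence $\mathcal{D}_H(K_n,K)\to 0$ splits into two one-sided estimates: given $\varepsilon>0$ and $n$ sufficiently large, every $x\in K_n$ extends inductively to a selection $x_m\in K_m$ ($m\geq n$) with summable jumps whose limit lies in $K$ and is within $\varepsilon$ of $x$; conversely, every $y\in K$ is by definition the limit of some $y_m\in K_m$, hence within $\varepsilon$ of $K_n$ for large $n$. The main obstacle I anticipate is the total-boundedness verification in the compactness step: a point of $K$ is the limit of an arbitrary admissible selection, so the telescoping estimate must simultaneously absorb the Hausdorff approximation error of $K_N$ by later terms $K_n$ and the tail of the selected sequence. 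Once this bookkeeping is handled, the remaining two-sided convergence estimate is a routine $\varepsilon/2$-splitting.
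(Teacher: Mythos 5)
Your proof is correct, but note that the paper itself supplies no argument here: the lemma is imported verbatim from Pant's paper (Lemma 3.2 of the cited reference), and is in substance the classical Blaschke--Hutchinson completeness theorem for the hyperspace of nonempty compact subsets under the Hausdorff metric. Your outline is the standard proof of that theorem, and the two corrections you make to the statement are both genuinely needed: $\mathcal{C}(X)$ is (per the paper's own earlier definition) the collection of nonempty compact subsets of $X$, not a space of continuous functions, and completeness of $(X,d)$ must be assumed --- the result is false for a general metric space, as one sees already from singleton sets $\{x_n\}$ with $\{x_n\}$ Cauchy but non-convergent in an incomplete $X$. The only places where your sketch leaves bookkeeping to the reader are exactly the ones you flag: closedness of the limit set $K$ is cleanest via the characterization $K=\bigcap_{n}\overline{\bigcup_{m\ge n}K_m}$ (an intersection of closed sets), and in the total-boundedness step the three $\varepsilon/3$ contributions (tail of the selection, Hausdorff distance $\mathcal{D}_H(K_m,K_N)$, and the radius of the cover of $K_N$) sum to $\varepsilon$ rather than $2\varepsilon/3$, which is harmless. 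With those details filled in, the argument is complete and is the one any standard reference (e.g., Barnsley or Falconer) gives for this lemma.
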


\begin{Lemma}[Lemma 3.3, \cite{pant2018fixed}]\label{Tn_Suzuki}
Let $(X,d)$ be a metric space and $T_p:\mathcal{C}(X)\rightarrow \mathcal{C}(X)~(p=1,2,\ldots,p)$ continuous STGPC mapping, $\forall~\mathcal{X}, \mathcal{Y}\in \mathcal{C}(X)$,
\begin{align*}
     \frac{1}{2} \mathcal{D}_H(\mathcal{X},T_p(\mathcal{X}))\leq \mathcal{D}_H(\mathcal{X}, \mathcal{Y})~~ \mbox{implies}~~ \mathcal{D}_H(T_p(\mathcal{X}), T_p(\mathcal{Y}))\leq \varphi_n(M_{T_p}(\mathcal{X}, \mathcal{Y})).   
\end{align*}
Define $T:\mathcal{C}(X)\rightarrow \mathcal{C}(X)$ by $T(\mathcal{X})=T_1(\mathcal{X})\bigcup T_2(\mathcal{X}) \bigcup \ldots \bigcup T_p(\mathcal{X})=\displaystyle\bigcup^{p}_{n=1}T_n (\mathcal{X})$ for each $\mathcal{X}\in \mathcal{C}(X)$. Then T also satisfies
\begin{align*}
    \frac{1}{2} \mathcal{D}_H(\mathcal{X},T(\mathcal{X}))\leq \mathcal{D}_H(\mathcal{X}, \mathcal{Y})~~ \mbox{implies}~~ \mathcal{D}_H(T(\mathcal{X}), T(\mathcal{Y}))\leq \eta (M_T(\mathcal{X}, \mathcal{Y})),
\end{align*}
for all $\mathcal{X}, \mathcal{Y}\in \mathcal{C}(X)$, where $\eta=\max \lbrace \varphi_n : n=1,2,\ldots,p \rbrace$.
\end{Lemma}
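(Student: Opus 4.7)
The plan is two-fold: first reduce $\mathcal{D}_H(T(\mathcal{X}),T(\mathcal{Y}))$ to a componentwise maximum using the standard union bound for Hausdorff distance, then bound each component using the STGPC property of $T_n$ and dominate everything by $\eta$ and $M_T(\mathcal{X},\mathcal{Y})$.

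For the first step I would invoke the inequality
$$\mathcal{D}_H\!\left(\bigcup_{n=1}^{p} A_n,\;\bigcup_{n=1}^{p} B_n\right)\leq \max_{1\leq n\leq p}\mathcal{D}_H(A_n,B_n),$$
valid for any finite collection of nonempty compact sets. It follows from writing out the two one-sided pieces of $\mathcal{D}_H$ and using $d(x,\bigcup_n B_n)=\min_n d(x,B_n)$. Specialising to $A_n=T_n(\mathcal{X})$ and $B_n=T_n(\mathcal{Y})$ yields
$$\mathcal{D}_H(T(\mathcal{X}),T(\mathcal{Y}))\leq \max_{1\leq n\leq p}\mathcal{D}_H(T_n(\mathcal{X}),T_n(\mathcal{Y})).$$

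For the second step, whenever the componentwise Suzuki precondition $\tfrac{1}{2}\mathcal{D}_H(\mathcal{X},T_n(\mathcal{X}))\leq \mathcal{D}_H(\mathcal{X},\mathcal{Y})$ holds, the STGPC property of $T_n$ gives $\mathcal{D}_H(T_n(\mathcal{X}),T_n(\mathcal{Y}))\leq \varphi_n(M_{T_n}(\mathcal{X},\mathcal{Y}))$. Since $\varphi_n\leq \eta$ pointwise by definition of $\eta$, and (when $M_{T_n}(\mathcal{X},\mathcal{Y})\leq M_T(\mathcal{X},\mathcal{Y})$) $\eta$ is controlled on the relevant range, one expects to close the bound by $\eta(M_T(\mathcal{X},\mathcal{Y}))$ for each $n$, hence for the max and hence for $\mathcal{D}_H(T(\mathcal{X}),T(\mathcal{Y}))$.

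The main obstacle is transferring the Suzuki precondition: we are given $\tfrac{1}{2}\mathcal{D}_H(\mathcal{X},T(\mathcal{X}))\leq \mathcal{D}_H(\mathcal{X},\mathcal{Y})$, but we need the analogous inequality for each $T_n$ in order to apply its STGPC property. The inclusion $T_n(\mathcal{X})\subseteq T(\mathcal{X})$ does \emph{not} imply $\mathcal{D}_H(\mathcal{X},T_n(\mathcal{X}))\leq \mathcal{D}_H(\mathcal{X},T(\mathcal{X}))$, since the two one-sided pieces of $\mathcal{D}_H$ respond in opposite directions to enlarging the target set; a simple example in $\mathbb{R}$ with $\mathcal{X}=\{0,10\}$, $T_1(\mathcal{X})=\{1\}$, $T_2(\mathcal{X})=\{11\}$ already gives $\mathcal{D}_H(\mathcal{X},T(\mathcal{X}))=1$ while $\mathcal{D}_H(\mathcal{X},T_1(\mathcal{X}))=9$. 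To handle indices where the componentwise precondition fails, I would split into cases: when it holds, apply STGPC directly as above; when $\mathcal{D}_H(\mathcal{X},T_n(\mathcal{X}))>2\mathcal{D}_H(\mathcal{X},\mathcal{Y})$, route $\mathcal{D}_H(T_n(\mathcal{X}),T_n(\mathcal{Y}))$ through the triangle inequality via $\mathcal{X}$ and $\mathcal{Y}$ and use $\varphi(t)<t$ together with continuity of $T_n$ to absorb the result into $\eta(M_T(\mathcal{X},\mathcal{Y}))$. Pinning down this fallback case, along with verifying the comparison $M_{T_n}\leq M_T$ (or a suitable substitute), is what I expect to be the chief technical step of the argument.
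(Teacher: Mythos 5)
The paper states this lemma purely as a citation to Pant (2018) and supplies no proof of its own, so your proposal has to be judged on its own terms. Your first step is correct and is the standard opening move: the bound $\mathcal{D}_H\bigl(\bigcup_n A_n,\bigcup_n B_n\bigr)\le\max_n\mathcal{D}_H(A_n,B_n)$ does hold for finite unions of nonempty compact sets and reduces the problem to the componentwise distances $\mathcal{D}_H(T_n(\mathcal{X}),T_n(\mathcal{Y}))$. You have also put your finger on exactly the right difficulty, one the classical Hutchinson argument for plain contractions does not face: the hypothesis $\tfrac12\mathcal{D}_H(\mathcal{X},T(\mathcal{X}))\le\mathcal{D}_H(\mathcal{X},\mathcal{Y})$ does not yield $\tfrac12\mathcal{D}_H(\mathcal{X},T_n(\mathcal{X}))\le\mathcal{D}_H(\mathcal{X},\mathcal{Y})$, and likewise $M_{T_n}(\mathcal{X},\mathcal{Y})$ is not dominated by $M_T(\mathcal{X},\mathcal{Y})$, because enlarging the target set can increase one of the two one-sided parts of the Hausdorff distance; your $\{0,10\}$ example is a valid illustration of this.

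However, the proposal does not close that gap, and the fallback you sketch does not work as described. In the bad case $\tfrac12\mathcal{D}_H(\mathcal{X},T_n(\mathcal{X}))>\mathcal{D}_H(\mathcal{X},\mathcal{Y})$, the triangle inequality gives $\mathcal{D}_H(T_n(\mathcal{X}),T_n(\mathcal{Y}))\le\mathcal{D}_H(T_n(\mathcal{X}),\mathcal{X})+\mathcal{D}_H(\mathcal{X},\mathcal{Y})+\mathcal{D}_H(\mathcal{Y},T_n(\mathcal{Y}))$, and the dominant term $\mathcal{D}_H(\mathcal{X},T_n(\mathcal{X}))$ is precisely the quantity that, by your own example, can far exceed every entry of $M_T(\mathcal{X},\mathcal{Y})$; since $\eta(t)<t$, there is no slack with which to absorb it, and continuity of $T_n$ contributes nothing here. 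A further unaddressed point is that the STGPC definition does not require $\varphi_n$ (hence $\eta$) to be nondecreasing, so even on indices where $M_{T_n}\le M_T$ you cannot pass from $\varphi_n(M_{T_n})$ to $\eta(M_T)$ without an extra monotonicity hypothesis (the paper itself silently invokes ``monotonicity of $\varphi$'' later, in the proof of its RB-operator theorem). As it stands, your argument establishes the conclusion only under the additional assumptions that the componentwise Suzuki preconditions hold and that the $\varphi_n$ are nondecreasing; the general case remains open in your write-up, and you would need either to add these hypotheses explicitly or to examine Pant's original argument to see how (or whether) the transfer of the precondition is actually justified there.
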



We can now define the following necessary maps to construct an iterated function system with STGPC. For $p = 1, 2,\ldots, P$, let $L_p : [y_0, y_p] \rightarrow [y_{p-1}, y_p]$ be P contraction homeomorphisms satisfying  \eqref{affine}. Consider the continuous mappings $F_p: K\rightarrow \mathbb{R}$, where $K := A\times \mathbb{R}$. Assume that $F_p$ are continuous in the first variable and STGPC in the second variable, and  $\forall y\in A, z,t \in \mathbb{R}$
\begin{align}\label{fn_contraction}
If~~ \frac{1}{2} d(y,F_p(y,z))\leq d(y,z)~~then~~ d(F_p(y,z),F_p(y,t)) \leq \varphi (m(z,t)),
\end{align}
where $m(z,t)=\max\lbrace d(z,t), d(z,F_p(y,z)), d(t,F_p(y,t)) \rbrace$. Now, the IFS using STGPC is constructed as $\lbrace K; v_p : p = 1, 2,\ldots,P\rbrace$, where the STGPC $w_p : K\rightarrow K$ are defined by
\begin{align}\label{w_p}
    w_p(y,z)=(L_p(y), F_p(y,z)),
\end{align}
such that $w_p(y_0, z_0)=~(y_{p-1},z_{p-1}), w_p(y_P,z_P)=(y_{p},z_{p})$ for $p=1,2,\ldots,P$.
The theorem below presents an attractor for IFS, depicted as the graph of the required FIF through STGPC.
\begin{theorem}
Let $\lbrace(y_p, z_p) : P = 1, 2,\ldots, P\rbrace$ be the given data set and $\lbrace K; w_p : p = 1, 2,\ldots,P\rbrace$ denote its associated IFS with Suzuki-type generalized $\varphi$- contractions $w_p$. Let $G$ denote the attractor of the IFS, then $G$ is the graph $G_g$ of continuous function $g:[y_0, y_P]\rightarrow \mathbb{R}$ satisfying $G(y_p)=z_p$ for all $p=1,2\ldots,P$. That is 
\begin{align*}
G_g=\lbrace (y, g(y)):y\in [y_0, y_P]\rbrace.    
\end{align*}
Subsequently, the IFS possesses an attractor known as $G_g$, such that
\begin{align*}
G_g=\bigcup^{P}_{p=1} w_p(G_g).
\end{align*}
\end{theorem}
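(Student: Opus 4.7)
The plan is to realize the attractor as the unique fixed point of the Hutchinson operator on the compact subsets of $K$, and then show this fixed set is a graph. First I would introduce $W:\mathcal{C}(K)\to\mathcal{C}(K)$ by $W(E)=\bigcup_{p=1}^{P}w_p(E)$. Each individual $w_p$ is a continuous STGPC on $K$ by construction \eqref{w_p}, so by Lemma 3.1 each induced map $E\mapsto w_p(E)$ inherits the STGPC property on $(\mathcal{C}(K),\mathcal{D}_H)$ with the same $\varphi_p$. Applying Lemma \ref{Tn_Suzuki} to the finite union yields that $W$ itself is a STGPC on $(\mathcal{C}(K),\mathcal{D}_H)$ with contraction control $\eta=\max\{\varphi_p:p=1,\ldots,P\}$. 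Since $(K,d)$ is complete, Lemma \ref{CH_complete} tells us $(\mathcal{C}(K),\mathcal{D}_H)$ is complete as well, and Theorem \ref{fp_SC} then supplies a unique $G\in\mathcal{C}(K)$ with $G=W(G)=\bigcup_{p=1}^{P}w_p(G)$. This already establishes the displayed self-similarity identity.

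The remaining work is to identify $G$ with the graph of a continuous interpolant. I would introduce the subfamily
\begin{equation*}
\mathcal{G}:=\bigl\{G_f\in\mathcal{C}(K):f\in C[y_0,y_P],\;f(y_0)=z_0,\;f(y_P)=z_P\bigr\},
\end{equation*}
and verify two properties. First, $\mathcal{G}$ is closed in $(\mathcal{C}(K),\mathcal{D}_H)$: Hausdorff convergence of graphs of equi-bounded continuous functions on a compact interval forces uniform convergence of the underlying functions (via Arzelà--Ascoli together with the graph structure), so the limit is again the graph of a continuous function with the same endpoint values. Second, $W(\mathcal{G})\subseteq\mathcal{G}$: for $G_f\in\mathcal{G}$, each piece
\begin{equation*}
w_p(G_f)=\bigl\{\bigl(L_p(y),\,F_p(y,f(y))\bigr):y\in[y_0,y_P]\bigr\}
\end{equation*}
is a graph over $[y_{p-1},y_p]$ of the continuous map $x\mapsto F_p\bigl(L_p^{-1}(x),f(L_p^{-1}(x))\bigr)$, using that $L_p$ is a homeomorphism by \eqref{affine} and that $F_p$ is continuous in the first variable and STGPC (hence continuous) in the second.

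The key technical hurdle is the compatibility of adjacent pieces, and this is where the endpoint identities \eqref{F_p} are used. At the shared node $y_p$ the $p$-th piece evaluates to $F_p(y_P,f(y_P))=F_p(y_P,z_P)=z_p$, while the $(p{+}1)$-th piece evaluates at $y_p$ to $F_{p+1}(y_0,f(y_0))=F_{p+1}(y_0,z_0)=z_p$. Hence the union pastes together into the graph of a single continuous function on $[y_0,y_P]$ whose values at $y_0$ and $y_P$ remain $z_0$ and $z_P$, so $W(G_f)\in\mathcal{G}$.

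With these two items in hand, I would finish as follows. Starting from any $E_0\in\mathcal{G}$ (for instance, the graph of a piecewise linear interpolant of the data), the Picard iterates $W^n(E_0)$ stay in $\mathcal{G}$, and by Theorem \ref{fp_SC} they converge in $(\mathcal{C}(K),\mathcal{D}_H)$ to $G$. Closedness of $\mathcal{G}$ forces $G\in\mathcal{G}$, so $G=G_g$ for some continuous $g$ with $g(y_0)=z_0$ and $g(y_P)=z_P$. The remaining interpolation conditions $g(y_p)=z_p$ for $1\le p\le P-1$ are read off the self-similarity identity together with the same endpoint conditions on $F_p$ that ensured compatibility. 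I anticipate the main subtlety to be the closedness of $\mathcal{G}$, since STGPC maps need not be Lipschitz and standard modulus-of-continuity arguments used in Barnsley's proof are unavailable; the graph characterization together with compactness of $[y_0,y_P]$ is what saves the argument.
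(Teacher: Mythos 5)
Your overall architecture differs from the paper's. The paper's proof of this theorem is a short verification: it takes as given a continuous $g$ satisfying the functional equation \eqref{f_Fn} (such a $g$ is produced separately, as the fixed point of the Read--Bajraktarevi\'c operator in the following theorem) and simply computes $\bigcup_{p=1}^{P}w_p(G_g)=G_g$, so that $G_g$ is identified with the attractor by uniqueness. You instead attempt a self-contained proof that the Hutchinson fixed point supplied by Lemmas \ref{CH_complete}, \ref{Tn_Suzuki} and Theorem \ref{fp_SC} must itself be a graph, by exhibiting a closed, $W$-invariant family $\mathcal{G}$ of graphs. The first half of your argument --- existence and uniqueness of $G$ with $G=\bigcup_{p}w_p(G)$, and the invariance $W(\mathcal{G})\subseteq\mathcal{G}$ via the endpoint conditions \eqref{F_p} and the homeomorphisms \eqref{affine} --- is sound, and is in fact more explicit than the paper about where the self-similarity identity comes from.

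The genuine gap is the claimed closedness of $\mathcal{G}$. The set of graphs of continuous functions with fixed endpoint values is \emph{not} closed in $(\mathcal{C}(K),\mathcal{D}_H)$: equi-boundedness alone does not yield equicontinuity, so Arzel\`a--Ascoli cannot be invoked. Concretely, the graphs of $f_n(y)=\sin(2\pi n y)$ on $[0,1]$ converge in Hausdorff distance to the full rectangle $[0,1]\times[-1,1]$, which is not a graph; a Hausdorff limit of graphs of continuous functions need not be a graph, and ``graph structure plus compactness of $[y_0,y_P]$'' does not rescue this. To close the argument you need either a common modulus of continuity for the iterates $W^n(E_0)$ (available in Barnsley's affine setting, but, as you yourself concede, not for general STGPC $F_p$), or you must first obtain $g$ as the fixed point of the RB operator on $(\mathcal{C}_0(A),\|\cdot\|_\infty)$ --- the content of the paper's subsequent theorem --- note that uniform convergence of the function iterates forces Hausdorff convergence of their graphs to $G_g$, and then identify $G=G_g$ by uniqueness of the fixed point of $W$. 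A secondary, minor point: Theorem \ref{fp_SC} as quoted asserts only existence and uniqueness of the fixed point, not convergence of Picard iterates, so the convergence statement you rely on should be cited explicitly from Suzuki or Pant.
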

\begin{proof}
We show that the graph $G_g$ of $g$ is an attractor of the IFS. Since, $w_p= (L_p(y),F_p(y,z))~\forall~p=1,2,\ldots,P$. From \eqref{f_Fn} and \eqref{w_p}, we get 
 \begin{align*}
     \bigcup^{P}_{p=1} w_p(G_g)&= \bigcup^{P}_{p=1} w_p(\lbrace (y, g(y)):y\in [y_0, y_P] \rbrace),\\
     &= \bigcup^{P}_{p=1} (\lbrace (L_p(y), F_p(y, g(y))):y\in [y_0, y_P] \rbrace),\\
     &= \bigcup^{P}_{p=1} (\lbrace (L_p(y),  g(L_p(y))):y\in [y_0, y_P] \rbrace),\\
     &= \bigcup^{P}_{p=1} (\lbrace (y, g(y)):y\in [y_{p-1}, y_p] \rbrace),\\
     &=G_g.
 \end{align*}    
\end{proof}

\subsection{Construction of $\alpha$-fractal function}\label{S_FIF}
Inspired by Barnsley's \cite{barnsley1986fractal} groundbreaking theory on fractal interpolation functions, Navascués \cite{navascues2005fractal} unveiled an exciting new family of FIFs called $\alpha$-fractal functions, representing a continuous function.  Based on this groundwork, we now develop a more generalized $\alpha$-fractal function using a Suzuki-type generalized $\varphi$-contraction mapping.

\begin{theorem}
Consider $A=[y_0,y_P]$ and $A_p=[y_{p-1},y_p]$. Let the given data set be $\lbrace (y_p,z_P)\in A\times \mathbb{R}, p=1,\ldots,P \rbrace$ with strictly abscissae. Let $L_p:A\rightarrow A_p$ be the $p$ contraction homeomorphisms defined by \eqref{affine} and $F_p(y,z)= \alpha_p z + q_p(y),\, y \in A_p, ~p\in P,$ where $F_p : A\times\mathbb{R}\rightarrow \mathbb{R}$, be the functions that are continuous in the first variable and STGPC map in the second variable satisfying  \eqref{F_p}. Then the operator $\mathbb{T} :\mathcal{C}_0(A ) \rightarrow \mathcal{C}_0(A )$ defined by
\begin{align*}
    (\mathbb{T}h)(y)=(F_p(L_p^{-1}(y)),h(L^{-1}_{p}(y))), ~~y\in A_p, p =1,\dots, P,
\end{align*}
will have a fixed point in $\mathcal C(A)$, where $\mathcal{C}_0(A )$ is the subspace of $\mathcal C(A)$ and $h$ is a continuous function $h: A \rightarrow \mathbb{R}$ with norm $\|h\|=|h|_{\infty} = \max_{y \in A} | h(y)|$ satisfying $h(y_0) = z_0, h(y_P ) = z_P$.
\end{theorem}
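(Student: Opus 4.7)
The plan is to reduce the theorem to an application of Theorem \ref{fp_SC} on the metric space $(\mathcal{C}_0(A),\|\cdot\|_\infty)$. Three things need to be established: that this ambient space is complete, that $\mathbb{T}$ really does map $\mathcal{C}_0(A)$ into itself, and that $\mathbb{T}$ inherits the Suzuki-type generalized $\varphi$-contraction inequality from the component maps $F_p$. Completeness is immediate: $\mathcal{C}_0(A)$ is the preimage of the single point $(z_0,z_P)$ under the continuous evaluation $h\mapsto(h(y_0),h(y_P))$ on $(\mathcal{C}(A),\|\cdot\|_\infty)$, hence a closed subset of a complete space and so complete in its own right.

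Next I would verify that $\mathbb{T}h\in\mathcal{C}_0(A)$ whenever $h\in\mathcal{C}_0(A)$. On each open subinterval $(y_{p-1},y_p)$, continuity of $\mathbb{T}h$ is immediate from continuity of $L_p^{-1}$, of $h$, and of $F_p$ in both arguments. The delicate points are the interior nodes $y_1,\ldots,y_{P-1}$, where two defining pieces meet. At the node $y_p$, the piece coming from the left evaluates to $F_p(L_p^{-1}(y_p),h(L_p^{-1}(y_p)))=F_p(y_P,z_P)=z_p$ by \eqref{F_p} and the endpoint pinning of $h$, while the piece coming from the right evaluates to $F_{p+1}(y_0,z_0)=z_p$ for the same reasons; the two branches agree, so $\mathbb{T}h$ is continuous across each node and simultaneously hits the prescribed interpolation values. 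The same computation at $y_0$ and $y_P$ yields $(\mathbb{T}h)(y_0)=z_0$ and $(\mathbb{T}h)(y_P)=z_P$, so the endpoint constraints defining $\mathcal{C}_0(A)$ are preserved.

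The substantive step is verifying the STGPC inequality for $\mathbb{T}$. For $h,g\in\mathcal{C}_0(A)$ and $y\in A_p$, writing $x=L_p^{-1}(y)$,
\begin{align*}
|(\mathbb{T}h)(y)-(\mathbb{T}g)(y)|=|F_p(x,h(x))-F_p(x,g(x))|,
\end{align*}
and the STGPC property \eqref{fn_contraction} in the second slot of $F_p$ bounds this by $\varphi_p(m_p(h(x),g(x)))$ whenever the pointwise Suzuki premise is in force. The goal is to take the supremum over $y\in A$, over $p=1,\ldots,P$, and replace $\varphi_p$ by a uniform $\eta=\max_p\varphi_p$ in the spirit of Lemma \ref{Tn_Suzuki}, to conclude
\begin{align*}
\tfrac{1}{2}\|h-\mathbb{T}h\|_\infty\le\|h-g\|_\infty\ \Longrightarrow\ \|\mathbb{T}h-\mathbb{T}g\|_\infty\le\eta\bigl(M_{\mathbb{T}}(h,g)\bigr),
\end{align*}
where $M_{\mathbb{T}}(h,g)=\max\{\|h-g\|_\infty,\|h-\mathbb{T}h\|_\infty,\|g-\mathbb{T}g\|_\infty\}$.

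The main obstacle is precisely the transition between the sup-norm Suzuki premise available on $\mathbb{T}$ and the pointwise Suzuki premise needed to invoke the second-variable STGPC property of each $F_p$. I would handle this by selecting, for the bound on $\|\mathbb{T}h-\mathbb{T}g\|_\infty$, a point $y^\star$ that attains (or approximates) the supremum and checking that the pointwise premise holds there as a consequence of the assumed sup-norm premise, then using the monotonicity of $\varphi$ together with the pointwise bound $m_p(h(x),g(x))\le M_{\mathbb{T}}(h,g)$ to pass from the local to the global estimate. Once this inequality is in hand, $\mathbb{T}$ qualifies as a STGPC on the complete space $(\mathcal{C}_0(A),\|\cdot\|_\infty)$, and Theorem \ref{fp_SC} delivers a unique fixed point $g\in\mathcal{C}(A)$, which is the desired $\alpha$-fractal function.
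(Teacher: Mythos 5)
Your overall strategy is the same as the paper's (show $\mathbb{T}$ is an STGPC on the complete space $\mathcal{C}_0(A)$ and invoke Theorem \ref{fp_SC}), and your treatment of completeness and of the well-definedness of $\mathbb{T}$ (matching of branches at the interior nodes via \eqref{affine} and \eqref{F_p}) is fine --- indeed more explicit than the paper, which omits both points. The problem is in the one step you yourself flag as ``the main obstacle,'' and your proposed resolution of it does not work as stated.

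You propose to pick a point $y^\star$ attaining $\sup_{y}|(\mathbb{T}h)(y)-(\mathbb{T}g)(y)|$ and to ``check that the pointwise premise holds there as a consequence of the assumed sup-norm premise.'' It does not, in general: with $x=L_p^{-1}(y^\star)$, the pointwise Suzuki premise needed to apply \eqref{fn_contraction} is $\tfrac12|h(x)-F_p(x,h(x))|\le |h(x)-g(x)|$, and the right-hand side $|h(x)-g(x)|$ at a maximizer of $|\mathbb{T}h-\mathbb{T}g|$ can be strictly smaller than $\|h-g\|_\infty$; the sup-norm premise only gives you an upper bound by $\|h-g\|_\infty$, so nothing forces the pointwise inequality at $y^\star$. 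The ingredient you never use --- and which the theorem explicitly assumes --- is the affine form $F_p(y,z)=\alpha_p z+q_p(y)$ from \eqref{ln_Fn}. The paper's argument runs the selection the other way around: choose $\bar y_p\in A_p$ maximizing $|h(L_p^{-1}(\cdot))-g(L_p^{-1}(\cdot))|$, where (because $L_p^{-1}$ maps $A_p$ \emph{onto all of} $A$) one has $|h(L_p^{-1}(\bar y_p))-g(L_p^{-1}(\bar y_p))|=\|h-g\|_\infty$, so the pointwise premise does follow from the sup-norm premise there (see \eqref{neqn:2}); then linearity of $F_p$ in $z$ guarantees that this same point also maximizes $|F_p(L_p^{-1}(\cdot),g(L_p^{-1}(\cdot)))-F_p(L_p^{-1}(\cdot),h(L_p^{-1}(\cdot)))|$ on $A_p$, both quantities being equal to $|\alpha_p|\,\|g-h\|_\infty$ (equations \eqref{neqn:4}--\eqref{neqn:5}). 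Without that alignment of maximizers, the local estimate $\varphi(m_p(h(x),g(x)))$ is only available at points that need not control $\|\mathbb{T}h-\mathbb{T}g\|_\infty$, and the global inequality does not follow. A secondary, cosmetic point: the hypothesis \eqref{fn_contraction} uses a single $\varphi$ for all $p$, so the detour through $\varphi_p$ and $\eta=\max_p\varphi_p$ in the style of Lemma \ref{Tn_Suzuki} is unnecessary here.
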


\begin{proof}
 Let $\mathcal{C}_0(A )$ be the subspace of $\mathcal C(A )$ with $h(y_0) = z_0, h(y_P ) = z_P$ where  $h$ is a continuous function $h: A \rightarrow \mathbb{R}$ with norm $\|h\|=|h|_{\infty} = \max_{y \in A} | h(y)|$. We note that since $L_p: A \to A_p$ is a homeomorphism, then it is possible to also write  
$|h|_{\infty} = \max_p \max_{y \in A_p} |h(L_p^{-1}(y))|$ since $L^{-1}_p : A_p \to A$ and consequently $\max_{y \in A_p} |h(L_p^{-1}(y))|=  \max_{y \in A} | h(y)|$ for all $p =1,\dots,P$.
 
 Define the RB-operator $\mathbb{T} :\mathcal{C}_0(A ) \rightarrow \mathcal{C}_0(A )$ by
\begin{align}\label{def_T}
    (\mathbb{T}h)(y)=(F_p(L_p^{-1}(y)),h(L^{-1}_{p}(y))), ~~y\in A_p, p =1,\dots, P.
\end{align}
We will proceed as follows to show that $\mathbb{T}$ is an STGPC mapping. We have  
\begin{align*} 
|\mathbb{T}g-\mathbb{T}h|_{\infty}=\max\limits_p\lbrace \max_{y \in A_p}|(F_p(L_p^{-1}(y)),g(L^{-1}_{p}(y)))-(F_p(L_p^{-1}(y)),h(L^{-1}_{p}(y)))| \rbrace.
\end{align*}
Suppose we have $\frac{1}{2} |g - \mathbb{T}g |_{\infty} \leq |g - h|_{\infty}$ then it follows that 
\begin{equation}
\frac{1}{2} | g(L^{-1}_p (y)) - F_p (L_p^{-1} (y), g(L_p^{-1} (y))) | \leq |g - h|_{\infty} \label{neqn:1}
\end{equation}
for $y \in A_p$, $p=1,\dots,P$. For all $p \in \{1,\dots,P\}$ there exists $\bar y_p \in A_p$ such that 
\begin{equation}\label{neqn:2}
|g - h|_{\infty}=   \max_{y \in A_p} |h(L_p^{-1}(y))- g(L_p^{-1}(y))| = |h(L_p^{-1}(\bar y_p))- g(L_p^{-1}(\bar y_p))|.
\end{equation}
Consequently we have 
\[
\frac{1}{2} | g(L^{-1}_p (\bar y_p)) - F_p (L_p^{-1} (\bar y_p), g(L_p^{-1} (\bar y_p))) | \leq  |h(L_p^{-1}(\bar y_p))- g(L_p^{-1}(\bar y_p))|
\]
and hence it follows from $F_p(y,z)$ being Suzuki type generalized $\varphi$ contraction mapping, which satisfies the eq. (\ref{fn_contraction}) that 
\begin{align}
|F_p(L_p^{-1}(\bar y_p),g(L^{-1}_{p}(\bar y_p)))-F_p(L_p^{-1}(\bar y_p),h(L^{-1}_{p}(\bar y_p)))| 
&\leq \varphi(m(g(L^{-1}_{p}(\bar y_p)),h(L^{-1}_{p}(\bar y_p)))) \nonumber\\
&\leq \max_{p} \max_{y \in A_p} \varphi(m(g(L^{-1}_{p}(y)),h(L^{-1}_{p}(y))))  \label{max_f}
\end{align}
where 
\begin{align}\label{phi_m}
 m(g(L^{-1}_{p}(y)),h(L^{-1}_{p}(y)))= (|g(L^{-1}_{p}(y))-&h(L^{-1}_{p}(y))|, |g(L^{-1}_{p}(y))-F_p(L_p^{-1}(y),g(L^{-1}_{p}(y)))|,\\ \nn&  |h(L^{-1}_{p}(y))-F_p(L_p^{-1}(y),h(L^{-1}_{p}(y)))|).
\end{align}
We now use the specific structure of $F_p$ given in (\ref{ln_Fn}) as 
$F_p(y,z)= \alpha_p z + q_p(y)$ to observe that, using \eqref{neqn:2}, we have (for any $p =1,\dots,P$) 
\begin{equation}\label{neqn:4}
|F_p(L_p^{-1}(\bar y_p),g(L^{-1}_{p}(\bar y_p)))-F_p(L_p^{-1}(\bar y_p),h(L^{-1}_{p}(\bar y_p)))| = |\alpha_p| |g(L^{-1}_{p}(\bar y_p)) - h(L^{-1}_{p}(\bar y_p))| =  |\alpha_p|   |g - h|_{\infty}.
\end{equation}
Next note that for any $p' = 1,\dots,P$ we have 
\begin{align}\label{neqn:5}
 \max_{y \in A_{p'}} |F_p(L_{p'}^{-1}(y),g(L^{-1}_{p'}(y)))-F_p(L_{p'}^{-1}(y),h(L^{-1}_{p'}(y)))| 
& =  |\alpha_{p'}|\max_{y \in A_{p'}}  |g(L^{-1}_{p'}(y)) - h(L^{-1}_{p'}(y))| \nonumber \\
& = |\alpha_{p'}| |g - h|_{\infty}.
\end{align}
By using equations \eqref{max_f}, \eqref{phi_m},  \eqref{neqn:5}   and \eqref{neqn:4}, and the monotonicity of $\varphi (\cdot)$, we can conclude that for all  $p'=1,\dots, P$
\begin{align*}
  \max_{y \in A_{p'}}   |F_{p'}(L_{p'}^{-1}(y),g(L^{-1}_{p'}(y))) & -F_{p'}(L_{p'}^{-1}(y),h(L^{-1}_{p'}(y)))|  
 \\
 &\leq  \max\limits_p(  \varphi( \max\limits_{y \in A_p} (|g(L^{-1}_{p}(y))-h(L^{-1}_{p}(y))|, |g(L^{-1}_{p}(y))-F_p(L_p^{-1}(y), g(L^{-1}_{p}(y)))|,\\
 &~~~~~~~~~~~~~~~~~~~~~~~~~~~~~~~|h(L^{-1}_{p}(y))-F_p(L_p^{-1}(y) , h(L^{-1}_{p}(y)))| ))\\
&\leq \varphi (\max (|g-h|_{\infty}, |g-\mathbb{T}g|_{\infty},  |h-\mathbb{T}h|_{\infty} ) )\\
&= \varphi(m(g,h)),\\
\mbox{i.e.},~~~~~~~  |\mathbb{T}g-\mathbb{T}h|_{\infty}&\leq  \varphi (m(g,h)) \qquad \text{on taking the maximum over 
 } p'=1,\dots,P, 
\end{align*}
where  $m(g,h)=\max (|g-h|_{\infty}, |g-\mathbb{T}g|_{\infty},  |h-\mathbb{T}h|_{\infty} )$. Finally we note that this  implies that $\mathbb{T}$ has a unique fixed point, denoted as $g \in \mathcal{C}_0(A)$ such that 
\begin{align}\label{T_func}
    g(y)=\mathbb{T}(g(y))=&F_p(L_p^{-1}(y),g(L^{-1}_{p}(y))), ~~y\in A_p, p=1,2,\ldots,P.\\
    g(L_p(y))=&F_p(y,g(y)), y\in A_p, p=1,2,\ldots,P.\nn
\end{align}

\end{proof}

The $\alpha$-fractal function is constructed using a widely researched fractal interpolation function defined by the equation \eqref{ln_Fn} as detailed in \citep{navascues2005fractal,navascues2005fractal1}.
\begin{align}\label{qn}
q_p(y)=g\circ L^{-1}_{p}(y)-\alpha_p b(y),~~p=1,2,\ldots,P
\end{align}
where $b\in \mathcal{C}(A)$ with $b(y_0)=g(y_0), b(y_p)=g(y_p)$ and $b\neq g$. The function $b$ is known as the base function. Navascués \cite{navascues2005fractal} observed that the base function $b$ linearly depends on $g$. In particular, we consider 
\begin{align}\label{base}
    b=Lg,
\end{align}
where $L:\mathcal{C}(A) \rightarrow \mathcal{C}(A)$ is a bounded linear operator. Now from \eqref{ln_Fn}, \eqref{def_T} and \eqref{qn} we have that,
\begin{align*}
 (\mathbb{T}h)(y)&=\alpha_p\circ h\circ L^{-1}_{p}(y)+g\circ L_p(y)\circ L^{-1}_{p}(y)-\alpha_p b\circ L^{-1}_{p}(y)\\
 &=\alpha_p\circ h\circ L^{-1}_{p}(y)+g-\alpha_p b\circ L^{-1}_{p}(y)\\
 &=g+\alpha_p(h-b)\circ L^{-1}_{p}(y), \forall~ y\in A,~ p=1,2,\ldots,P.
\end{align*}
Hence, $g^{\alpha}$ satisfies the following self-referential equation
\begin{align}\label{SFF}
g^{\alpha}=g+\alpha_p(g^{\alpha}-b)\circ L^{-1}_{p}(y),~~\forall~ y\in A,~ p=1,2,\ldots,P.
\end{align}
The fractal function $g^\alpha$ as defined in \eqref{SFF} can be used to construct the fractal operator, which can approximate any continuous function, defined as follows: 

\begin{definition}[Fractal Operator]
Let  $g^{\alpha}$ be the continuous function defined by the IFS defined by \eqref{ln_Fn}, \eqref{qn} and \eqref{base}. $g^{\alpha}$ is the $\alpha$-fractal function associated with $g$ with respect to $L$. Let $\Omega:y_0<y_1\cdot<y_P$ be the partition of interval $A=[y_0,y_P]$ and define, $|\alpha|_{\infty}= \max \lbrace |\alpha_p|:p=1,2,\ldots,P \rbrace$. The operator with respect to $\Omega$ and $L$ as follows:
\end{definition} 
\begin{align*}
 \mathcal{G}^{\alpha}: &~ \mathcal{C}(A)\rightarrow \mathcal{C}(A)\\
 &g\longmapsto g^{\alpha}.
\end{align*}
The map $\mathcal{G}^{\alpha}$ is a bounded linear operator referred to as $\alpha$-fractal operator with respect to partition $\Omega$ and base function $b=Lg$. The uniform error bound for the perturbation process is given in \cite{navascues2010fractal} as
\begin{align*}
||g^{\alpha}-g||_{\infty} \leq \frac{||\alpha||_{\infty}}{1-||\alpha||_{\infty}} ||g-b||_{\infty}.
\end{align*}

We have thus, constructed the FIF through STGPC described in \eqref{SFF}, and we will visualize the graph of FIF for the given data set. The following subsection presents the graphical representation of $\alpha$-FIF through STGPC. 

\subsection*{Graph of $\alpha$-FIF through STGPC}
 In this case, we are using the $g(y)$ function constructed in the example \ref{ex_conti} and the base function $b=g(y^2)$. Consider the initial dataset is as follows: $\lbrace(4,0),(5,2),(7,1),(7.5,0.5),(8,0),(9,0), (10, 0) \rbrace$. 
The graph of FIF using STGPC is shown in Figure \ref{S_graph}. The MATLAB version R2023b software package is used to construct the FIF with the given dataset.
\begin{figure}[htp]
    \centering
    \includegraphics[width=0.5\linewidth]{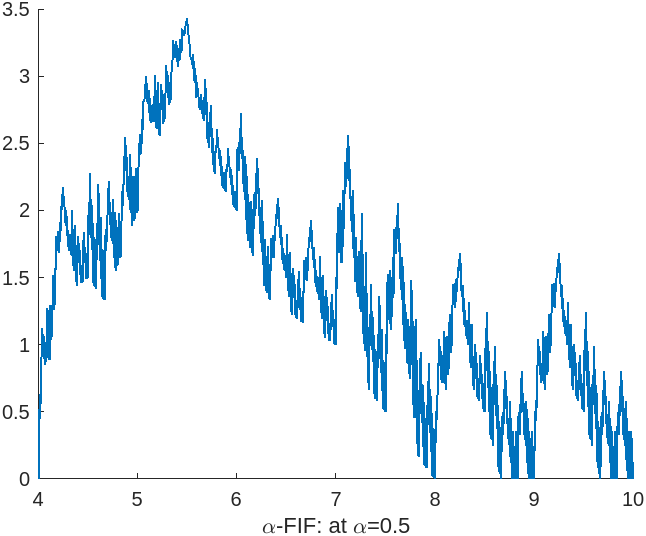}
    \caption{Graph of $\alpha$-FIF through STGPC for $\alpha=0.5$ for the given data set}
    \label{S_graph}
\end{figure}

In Figure \ref{S_graph}, fluctuations are noticeable for the given dataset. These fluctuations can vary depending on the values of the scaling factor $\alpha$. To analyze the fractal patterns and trend fluctuations, we will use box-dimensional analysis. The increase in box dimension indicates highly fluctuating trends, which we will thoroughly explain in the following subsection. 

\subsection{Dimensional Analysis}\label{B_D}
The fractal dimension is important for understanding the complexity of fractals. The box-counting dimension is used to characterize the scaling properties of fractals by demonstrating how the detail or complexity of the fractal changes as we zoom in.  The box-counting dimension is defined as follows:
\begin{definition}[Box Dimension \citep{navascues2005fractal}] 
Let $Z$ be a non-empty bounded subset of the metric space $(X,d)$. The box dimension of $Z$ is defined as 
\begin{align*}
\dim_B Z=\lim_{\epsilon\to 0} \frac{\log N_{\epsilon}(Z)}{-\log \epsilon},
\end{align*}
The expression $\log N_{\epsilon}(Z)$ represents the smallest number of sets with a diameter of at most $\epsilon$ required to cover $Z$, assuming the limit exists. If the limit does not exist, the upper and lower box dimensions are defined as follows:
\begin{align*}
    \overline{\dim}_B Z&= \limsup_{\epsilon\to 0} \frac{\log N_{\epsilon}(Z)}{-\log \epsilon},\\
        \underline{\dim}_B Z&= \liminf_{\epsilon\to 0} \frac{\log N_{\epsilon}(Z)}{-\log \epsilon}.
\end{align*}
\end{definition}
For a comprehensive analysis of box dimension, the reader may refer \cite{falconer2014fractal}.

The following theorem presents a significant method for calculating the box dimension of $\alpha$-FIF for the given dataset. 

\begin{theorem}\label{dim}[Theorem 5, \citep{barnsley1989hidden}] Let $\Omega=(y_0,y_1,\ldots,<y_P)$ be a partition of $A=[y_0,y_P]$ satisfying $y_0<y_1<,\ldots,<y_P$ and let $\alpha=(\alpha_1,\alpha_2,\ldots,\alpha_P)$. Let $g$ and $b$ be Lipschitz functions defined on $A$ with $b(y_0)=g(y_0)$ and $b(y_P)=g(y_P)$. If the data points $\lbrace (y_i,g(y_i)):i=0,1,\ldots,P \rbrace$ are not collinear, then 
\begin{align*}
\dim_B (G_r(g^{\alpha}_{\Omega,b}))=
\begin{cases}
     &D,~ if \sum^{P}_{i=1} |\alpha_i|>1; \\
     &1,~ otherwise,
\end{cases}
\end{align*}
where $(G_r(g^{\alpha}_{\Omega,b}))$ denotes the graph of $g^{\alpha}_{\Omega,b}$. The box dimension, denoted as $D$, can be determined by solving the following equation. 
\begin{align*}
    \sum^{P}_{i=1} |\alpha_i| a_i^{D-1}=1,
\end{align*}
where $a_i=y_i-y_{i-1}$, for all $i=0,1,\ldots,P$.
\end{theorem}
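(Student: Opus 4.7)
The plan is to bound $N_\epsilon(G)$, the minimum number of $\epsilon$-boxes needed to cover the graph $G := G_r(g^\alpha_{\Omega,b})$, from both sides by exploiting the self-referential identity $G = \bigcup_{p=1}^P w_p(G)$ and the affine structure $w_p(y,z) = (a_p y + b_p,\, \alpha_p z + q_p(y))$, where $q_p$ is Lipschitz because $g$ and $b$ are. I would first dispose of the easy case $\sum_p |\alpha_p| \le 1$: in that regime one can show that the Read--Bajraktarević fixed point $g^\alpha$ is of bounded variation (by iterating the operator and summing a geometric series of total-variation contributions bounded by $\sum |\alpha_p|$ times the variation of $b$), whence the graph of a continuous BV function on a bounded interval has box dimension exactly $1$.

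The substantive case is $\sum_p |\alpha_p| > 1$. For the upper bound I would cover $w_p(G)$ with images of an optimal $\epsilon/a_p$-cover of $G$. Each square of side $\epsilon/a_p$ is mapped by $w_p$ into a rectangle of horizontal side $\epsilon$ and vertical side $|\alpha_p|\,\epsilon/a_p$, up to an additive constant controlled by $\mathrm{Lip}(q_p)\cdot\epsilon/a_p$, and this rectangle requires $O(|\alpha_p|/a_p)$ boxes of side $\epsilon$. Summing over $p$ yields a recursive inequality
\begin{align*}
N_\epsilon(G) \le C \sum_{p=1}^P \frac{|\alpha_p|}{a_p}\, N_{\epsilon/a_p}(G),
\end{align*}
and the Moran-type analysis (iterate and compare against the ansatz $N_\epsilon \sim \epsilon^{-D}$) forces $\overline{\dim}_B G \le D$ where $D$ solves $\sum_p |\alpha_p| a_p^{D-1} = 1$.

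For the lower bound I would exploit the non-collinearity hypothesis to extract a uniform oscillation estimate. Non-collinearity guarantees that no choice of base function $b$ can make $g^\alpha$ affine on $A$, so the oscillation $\sigma := \mathrm{osc}(g^\alpha, A) > 0$. Applying the self-referential relation $g^\alpha(L_p(y)) = \alpha_p g^\alpha(y) + q_p(y)$ over $y \in A$ shows that the vertical extent of $G \cap (A_p \times \mathbb{R})$ is at least $|\alpha_p|\sigma - \mathrm{Lip}(q_p)\cdot a_p$, which for small $a_p$ is comparable to $|\alpha_p|\sigma$. A rectangle of horizontal side $a_p$ and vertical side $|\alpha_p|\sigma$ needs at least $c|\alpha_p|/a_p$ boxes of side $a_p$ to cover its graph. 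Iterating this estimate through successive generations of the IFS (with $a_p$ replaced by products $a_{p_1}\cdots a_{p_k}$) yields a reverse inequality of the same Moran form,
\begin{align*}
N_\epsilon(G) \ge c \sum_{p=1}^P \frac{|\alpha_p|}{a_p}\, N_{\epsilon/a_p}(G),
\end{align*}
from which $\underline{\dim}_B G \ge D$ follows by the same Moran identity, completing the proof.

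The main technical obstacle is the lower bound: one must ensure that oscillations across generations do not systematically cancel. The non-collinearity hypothesis is precisely what prevents such cancellation, because if the data were collinear the affine interpolant $\ell$ would satisfy $\ell(L_p(y)) = \alpha_p \ell(y) + q_p(y)$ with the trivial choice of $q_p$, making the cascade collapse. Making this quantitative — typically by isolating a subinterval on which the interpolant's slope differs from the overall secant slope and propagating that discrepancy through the IFS — is the delicate step; once done, the geometric progression governed by $\sum |\alpha_p| a_p^{D-1}$ automatically selects the exponent $D$.
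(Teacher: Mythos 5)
First, note that the paper does not prove this statement at all: it is imported verbatim as Theorem 5 of the cited reference of Barnsley et al., so there is no in-paper argument to compare yours against; what follows measures your outline against the standard proof in that reference. Your overall architecture (a self-referential covering recursion plus a Moran-type extraction of the exponent, with non-collinearity feeding the lower bound) is the right one, but there is a genuine gap in both recursions: the multiplicative constants are not harmless. In your upper bound, the image of an $(\epsilon/a_p)$-square under $w_p$ has vertical extent $|\alpha_p|\epsilon/a_p + \mathrm{Lip}(q_p)\,\epsilon/a_p$, so it needs roughly $(|\alpha_p|+\mathrm{Lip}(q_p))/a_p + 1$ boxes of side $\epsilon$, not $|\alpha_p|/a_p$; the Lipschitz term is of the \emph{same} order as the main term, not lower order. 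Consequently your recursion really reads $N_\epsilon \le \sum_p C_p\,a_p^{-1} N_{\epsilon/a_p}$ with $C_p = |\alpha_p| + \mathrm{Lip}(q_p) + O(a_p)$, and iterating a Moran inequality with inflated coefficients yields only $\overline{\dim}_B G \le D'$, where $D'$ solves $\sum_p C_p a_p^{D'-1}=1$ and is strictly larger than $D$. The same defect, with a deflating constant $c<1$, makes your lower bound deliver an exponent strictly below $D$. A multiplicative constant in a Moran recursion shifts the exponent; only additive lower-order errors are tolerable.

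The standard repair --- and the way the cited proof actually runs --- is to count boxes column by column via oscillations: over a column $I$ of width $\epsilon$ the graph needs about $\mathrm{osc}(g^{\alpha},I)/\epsilon + 2$ boxes, and the functional equation gives $\mathrm{osc}(g^{\alpha}, L_p(I)) \le |\alpha_p|\,\mathrm{osc}(g^{\alpha},I) + \mathrm{Lip}(q_p)\,|I|$, together with the reverse inequality with a minus sign. Summing over the columns covering $A$ turns the Lipschitz contribution into the \emph{additive} constant $\mathrm{Lip}(q_p)\,|A|$, so the total-oscillation sums $O_\epsilon$ obey $\sum_p|\alpha_p|\,O_{\epsilon/a_p} - C \le O_\epsilon \le \sum_p|\alpha_p|\,O_{\epsilon/a_p}+C$, and $N_\epsilon \asymp \epsilon^{-1}O_\epsilon + \epsilon^{-1}$; since $D>1$ the additive errors are negligible and the sharp exponent survives. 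Two smaller points: your bounded-variation argument for the easy case only works when $\sum_p|\alpha_p|<1$ (the geometric series diverges at $\sum_p|\alpha_p|=1$; there one instead gets $O_\epsilon = O(\log(1/\epsilon))$ from the additive recursion, which still forces dimension $1$); and for the lower bound you still owe the quantitative step you yourself flag --- producing a definite scale at which the oscillation sum exceeds a fixed positive threshold --- before the expansive recursion $O_\epsilon \ge \sum_p|\alpha_p|\,O_{\epsilon/a_p} - C$ with $\sum_p|\alpha_p|>1$ can be bootstrapped to $O_\epsilon \gtrsim \epsilon^{1-D}$.
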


The box dimension analysis will be used to examine the case study on Spinach price volatility within a specific time period. The dimensional analysis is directly linked to price variations over a particular time frame.

In the upcoming section, we will utilize the fractal interpolation function constructed in section \ref{S_FIF} to examine the price fluctuations of spinach within the vegetable market. This function adheres to the Suzuki conditions \eqref{Suzuki_def} for a defined $\varphi$. As a type of non-linear contraction, it allows us to find a fixed point without depending on continuity or semi-continuity, thereby extending the principles of Banach's fixed point theory. Given that we are employing interpolation theory, we treat the fractal function as a continuous function to interpolate the data set. Finally, we construct the fractal interpolation function for the different values of scaling factor $\alpha$'s and discuss the box dimension analysis to quantify the complexities of price variability.

\section{Case study}\label{cs}

The fractal interpolation function (FIF) is widely utilized across various disciplines, including finance, time series data analysis, and examining COVID-19 spread through box dimension analysis. Verma and Kumar \cite{kumar2024alpha}  analyzed the stock index S\&P utilizing FIF. Additionally, \cite{verma2024fractal} provided a dimensional analysis of FIF to examine the stock prices of financial institutions post-mergers and acquisitions. The dimensional study of COVID-19 spread was detailed in the work of \cite{agrawal2023dimensional}. The applications of FIF span diverse areas of study.
In the following case study, we analyze price fluctuations of Spinach over a fixed period.


The vegetable market is known for its extremely unpredictable nature, which can be attributed to a range of factors such as weather conditions, disruptions in the supply chain, seasonal shifts, and fluctuations in market demand. In this fractal approach, we use historical average price data to analyze the price variation over a period through FIFs. The FIF method efficiently grasps the intricate details of price changes, making it an essential tool for predicting future market conditions. Furthermore, incorporating a scaling factor enhances flexibility and provides valuable insights for analyzing vegetable prices using FIFs.  

In the following sections, the methodology for the case study is presented in \ref{MTH}. Spinach's average monthly price data is discussed in section \ref{D_A}. Section \ref{F_A} presents the fractal analysis of Spinach's price data. Finally, section \ref{BDA} discusses the dimensional analysis of FIFs to understand the complexity of the historical data set.

\subsection{Methodology}\label{MTH}
The analysis of spinach price fluctuation is outlined in the following sections. The flowchart in Fig. \ref{flow}  illustrates the methodology for studying price fluctuation using FIFs.
\begin{figure}[htp]
    \centering 
    \includegraphics[width=0.4\linewidth]{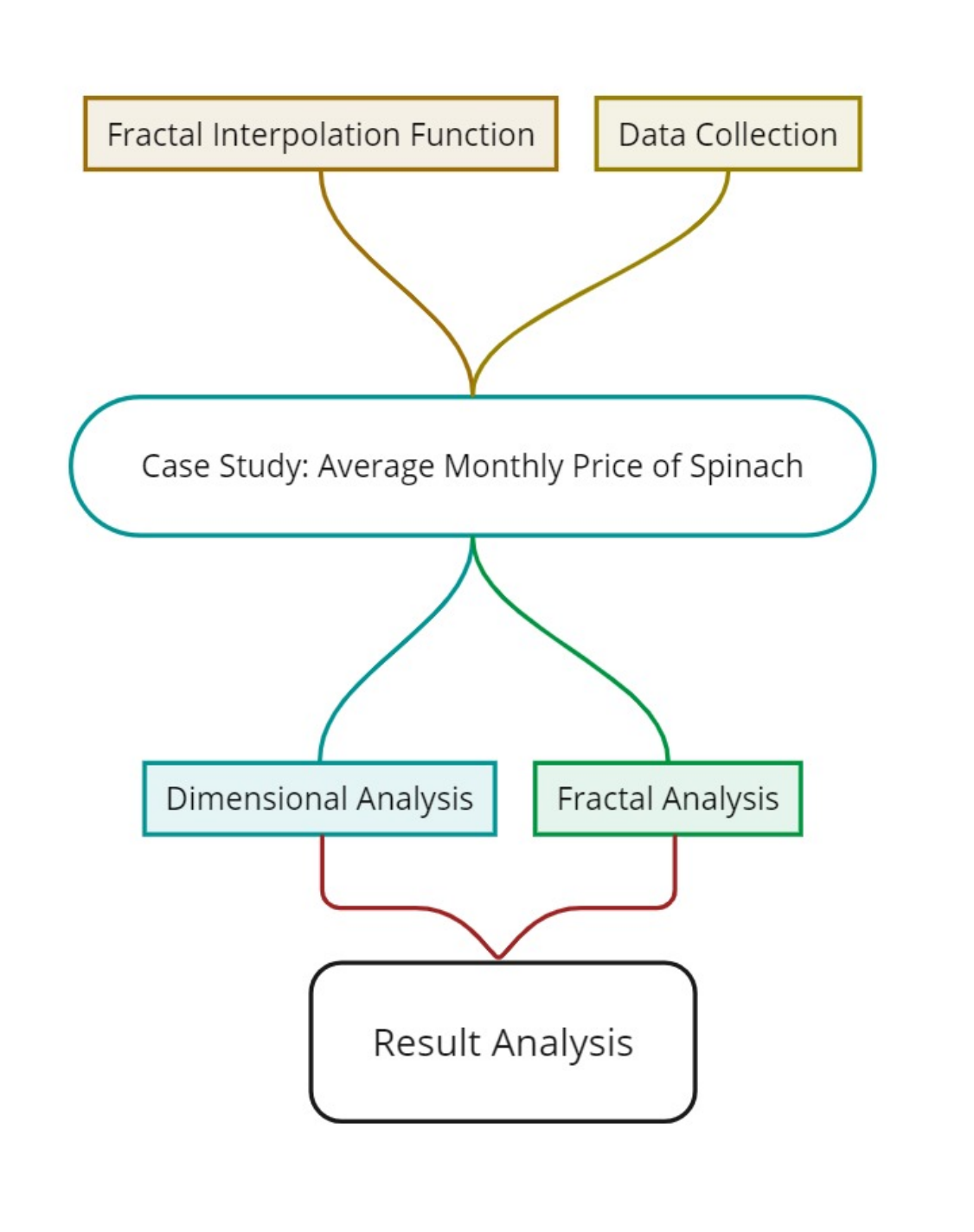}
    \caption{Methodlogy flowchart }
    \label{flow}
\end{figure}

\subsection{Case study data}\label{D_A}
For this specific scenario, we focus on Spinach and examine the average monthly price data from September 2023 to July 2024 in Azadpur market, New Delhi, India. All prices are listed in \rupee (Indian rupees) per kg. All the data in Table \ref{price_table} is sourced from the Government of India's agricultural market website at \url{https://agmarknet.gov.in/PriceTrends/SA_Month_PriM.aspx}.

\begin{table}[htp]
    \centering
    \begin{adjustbox}{width=\textwidth} 
    \begin{tabularx}{0.8\textwidth}{l*{5}{X}}\toprule
    Month & $y_i$    & Min Price & Max Price & Average Price $(z_i)$ \\     
      \midrule
September 2023& 0.0  & \rupee 5 & \rupee 11  &\rupee 8.0 \\
       October 2023 & 0.1 & \rupee 5  & \rupee 10 &\rupee 7.5 \\
       November 2023 & 0.2 & \rupee 3 & \rupee 9  &\rupee 6.0\\
       December 2023 & 0.3 & \rupee 4 & \rupee 10 &\rupee 7.0\\
       January 2024 & 0.4 & \rupee 5 &\rupee 15 &\rupee 10\\
       February 2024 & 0.5 & \rupee 2  &\rupee 8  &\rupee 5.0\\
       March 2024 & 0.6  & \rupee 4  & \rupee 10  &\rupee 7.0 \\
       April 2024 & 0.7 & \rupee 3  &\rupee 8  &\rupee 5.5 \\
       May 2024 & 0.8  & \rupee 5 & \rupee 10 & \rupee 7.5\\
       June 2024 & 0.9 & \rupee 7 & \rupee 10 & \rupee 8.5\\
       July 2024 & 1.0  &\rupee 5 & \rupee 15  & \rupee 10 \\
        \bottomrule
    \end{tabularx}
    \end{adjustbox}
    \caption{Average price of spinach (in \rupee) per kilogram for each month}
    \label{price_table}
\end{table}

\subsection{Fractal Analysis}\label{F_A}
 We will study the price variation of Spinach over a given period of time from the fractal perspective. This will involve the $\alpha-$FIF, which comprises the initial data set, the function $g(y)$, the base function $b$, and a scaling factor $\alpha$ as defined in \eqref{S_FIF}. In this case, the base function $b$ is taken as $b=g(y^2)$, the function $g(y)$ is taken as \eqref{g_f} with $b\neq g$, and the initial data set is given by $\lbrace (y_i,z_i):i=0,1,\ldots,10 \rbrace$ which is not collinear. Finally, the analysis of box dimension as described in Section \ref{FIF_S} is presented to study the complexity of price fluctuations. 
 
\begin{equation}\label{g_f}
g(y)=
    \begin{cases}
        -5y + 8 & \text{for } 0.0 \leq y < 0.1 \\
-15y + 9 & \text{for } 0.1 \leq y < 0.2 \\
10y + 4 & \text{for } 0.2 \leq y < 0.3 \\
30y - 2 & \text{for } 0.3 \leq y < 0.4 \\
-50y + 30 & \text{for } 0.4 \leq y < 0.5 \\
20y - 5 & \text{for } 0.5 \leq y < 0.6 \\
-15y + 16 & \text{for } 0.6 \leq y < 0.7 \\
20y - 8.5 & \text{for } 0.7 \leq y < 0.8 \\
10y+0.5 & \text{for } 0.8 \leq y < 0.9 \\
15y - 5 & \text{for } 0.9 \leq y \leq 1.0.
    \end{cases}
\end{equation}

By varying $\alpha$, different fractal interpolation functions are obtained. For $\alpha=0.4$, the FIF is represented in the following figures, which show the trend of average price for the different values of $\alpha$. Figures \ref{P3}, \ref{P5}, and \ref{Pmix} depict the FIF, while \ref{P0} depicts the particular case of FIF, which is the classical interpolation function. The variation in the vertical scaling factor $\alpha$ leads to changes in the peaks and troughs of the FIFs graphs. Increasing the values of $\alpha$ will result in higher peaks and troughs, making the graphs more complex. To measure this complexity, we calculate the box dimension for different values of $\alpha$.






\begin{figure}[htbp]
    \centering
    \subfloat[$\alpha$-FIF  $g^\alpha$ at $\alpha=0.4$ for the data set $(y_i,z_i)$]{
        \includegraphics[width=0.4\textwidth]{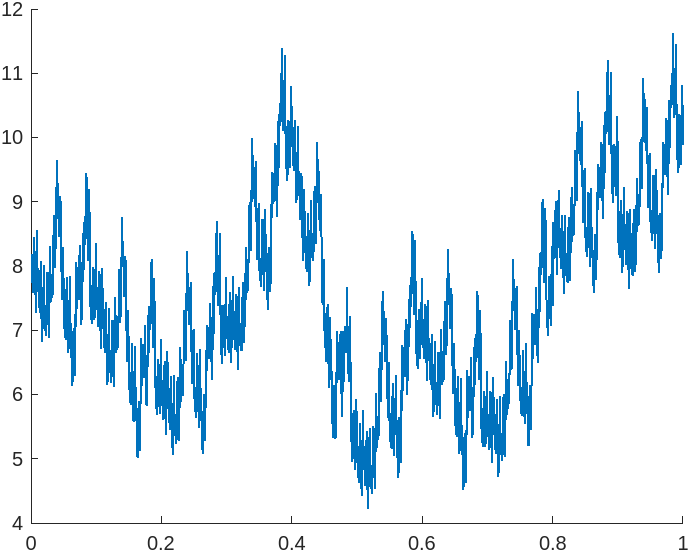}
        \label{P3}
    }
    \hspace{0.05\textwidth} 
    \subfloat[$\alpha$-FIF  $g^\alpha$ at $\alpha=0.6$ for the data set $(y_i,z_i)$]{
        \includegraphics[width=0.4\textwidth]{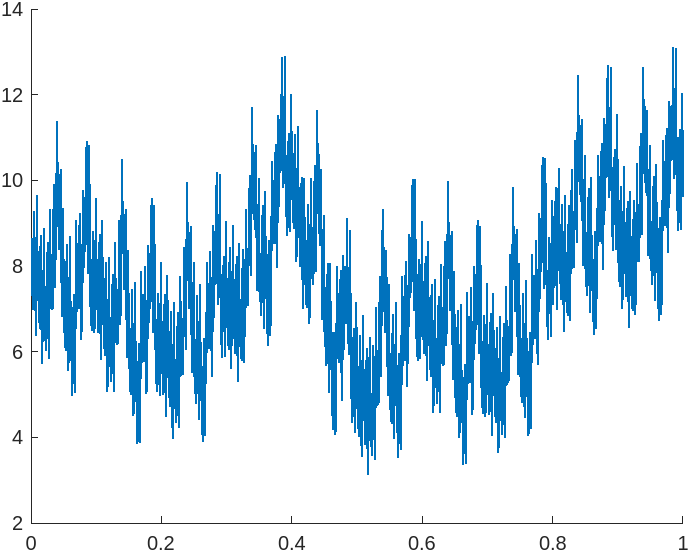}
        \label{P5}
    }
    \hspace{0.05\textwidth}
    \subfloat[$\alpha$-FIF  $g^\alpha$ at $\alpha=(0.1, 0.2, 0.5, 0.2, 0.4, 0.2, 0.4, 0.2, 0.3, 0.1)$ for the data set $(y_i,z_i)$]{
        \includegraphics[width=0.4\textwidth]{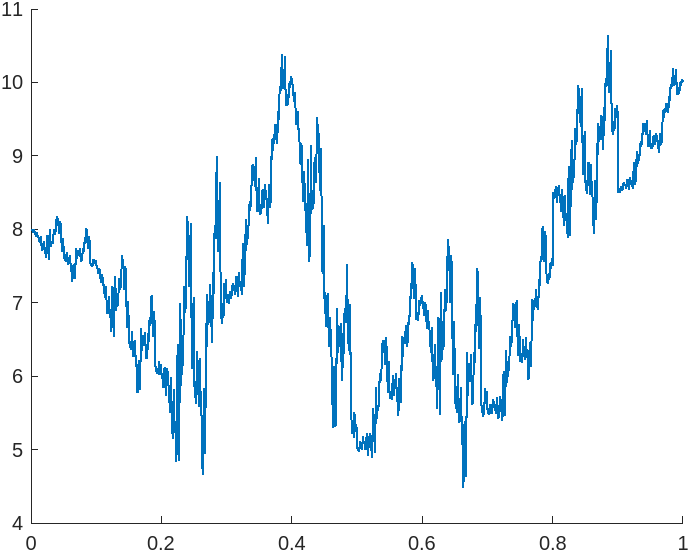}
        \label{Pmix}
    }
        \hspace{0.05\textwidth}
    \subfloat[$\alpha$-FIF  $g^\alpha$ at $\alpha=0.0$ for the data set $(y_i,z_i)$ which represents the classical interpolation function]{
        \includegraphics[width=0.4\textwidth]{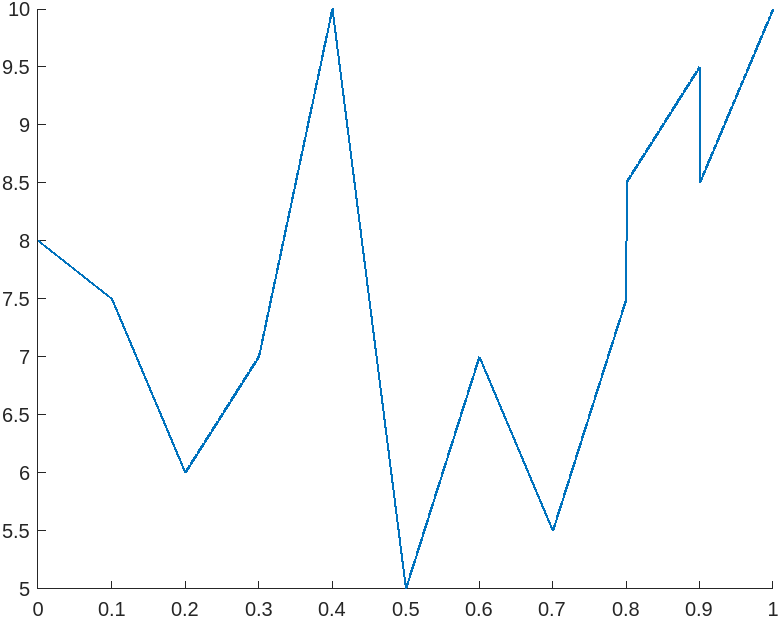}
        \label{P0}
    }
    \caption{$\alpha$-FIF for the data set $(y_i,z_i)$ for the different values of $\alpha$}
    \label{VEG}
\end{figure}

\subsection{Box Dimension Analysis}\label{BDA}

The fractal dimension of the $\alpha$-FIFs with the given datasets can be calculated using Theorem \ref{dim}. Box dimension analysis allows us to study the complexity or fluctuations in the $\alpha$-FIF trends. An increase in box dimension indicates that the trend exhibits more fluctuations over the specified time period. We calculate the box dimension for different values of $\alpha$ given below.

\textbf{Case 1 :} Based on the given values $a_i=0.1$ and $\alpha=0.4$, the fractal function depicted in Figure \ref{P3} is plotted. It's worth noting that $\sum^{10}_{i=0} 0.4=4>1$. Therefore, the dimension will be calculated using Theorem \ref{dim}:
\begin{align*}
\sum^{11}_{i=1} (0.4) \left( \frac{1}{10} \right)^{D-1}=1 \implies 4 \left( \frac{1}{10} \right)^{D-1}=1 \implies 10^{D-1}=4.
\end{align*}
After simplifying the equation, we will have
\begin{align*}
    D=\log(4)+1\approx 1.60.
\end{align*}

\textbf{Case 2 :} Based on the given values $a_i=0.1$ and $\alpha=0.6$, the fractal function depicted in Figure \ref{P5} has been plotted. It's worth noting that $\sum^{10}_{i=0} 0.6=6>1$. Therefore, the dimension will be calculated using Theorem \ref{dim}:
\begin{align*}
\sum^{10}_{i=0} (0.6) \left( \frac{1}{10} \right)^{D-1}=1 \implies 6 \left( \frac{1}{10} \right)^{D-1}=1 \implies 10^{D-1}=6.
\end{align*}
After simplifying the equation, we will have
\begin{align*}
    D=\log(6)+1\approx 1.77.
\end{align*}

\textbf{Case 3 :} Based on the given values $a_i=0.1$ and $\alpha=(0.1, 0.2, 0.5, 0.2, 0.4, 0.2, 0.4, 0.2, 0.3, 0.1)$, the fractal function depicted in Figure \ref{Pmix} has been plotted. It's worth noting that $\sum^{10}_{i=0} (0.1+ 0.2+0.5+0.2+0.4+0.2+0.4+0.2+0.3+ 0.1)=2.6>1$. Therefore, the dimension will be calculated using Theorem \ref{dim}:
\begin{align*}
&\sum^{10}_{i=0} (0.1+ 0.2+0.5+0.2+0.4+0.2+0.4+0.2+0.3+ 0.1) \left( \frac{1}{10} \right)^{D-1}=1\\
&= 2.6 \left( \frac{1}{10} \right)^{D-1}=1 \implies 10^{D-1}=2.6.
\end{align*}
After simplifying the equation, we will have
\begin{align*}
    D=\log(2.6)+1\approx 1.41.
\end{align*}

The complexity of price data is directly affected by the dimension of the graph of Spinach retail prices. This study also includes a comparative analysis of classical and fractal interpolation models. Figures \ref{P3}, \ref{P5} and \ref{Pmix}  illustrate the $\alpha$-FIF for the different values of $\alpha$. The classical interpolation function is obtained by setting $\alpha=0.0$, as shown in the graph \ref{P0}. The classical interpolation function is a specific case of the $\alpha$-FIF.

\section{Conclusion and future work}
The proposed research outlines the construction of the FIF, also known as the $\alpha$-fractal function, using an STGPC with a constant scaling factor. It also covers the establishment of an IFS and discusses the existence and uniqueness of the attractor. The article concludes with a case study illustrating the application of the FIF in analyzing the fractal dimension of spinach prices over time, demonstrating the price variation with different values of $\alpha$ and the complexity of the resulting graph as discussed by the fractal dimensional analysis. This dimensional analysis provides insights into understanding the complexities of spinach prices.

The scope of this work could be broadened to include an examination of the financial context, as it offers a more appropriate depiction of economic data. This approach is particularly well suited for applying fractal methods to investigate complex patterns in financial analysis. Moreover, this method can forecast future trends by analyzing historical datasets.

\section*{Statements and Declaration} 
\noindent\textbf{Conflict of interest:} The authors declare that they have no conflict of interest.\\
\noindent\textbf{Ethical approval:} This article does not contain any studies with human participants or animals performed by any of the authors.\\
\noindent\textbf{Funding:} The authors did not receive support from any organization for the submitted work.\\
\noindent\textbf{Data availability:} \url{https://agmarknet.gov.in/PriceTrends/SA_Month_PriM.aspx}

 \bibliography{sn-bibliography}
\bibliographystyle{acm}
 
\end{document}